\theoremstyle{plain}
\newcommand{\ca}{\mathcal}
\newtheorem{defi}{Definition}[section]
\newtheorem{theo}[defi]{Theorem}
\newtheorem{lem}[defi]{Lemma}
\newtheorem{con}[defi]{Conjecture}
\newtheorem{cor}[defi]{Corollary}
\newtheorem{prob}[defi]{Problem}
\newtheorem{ques}[defi]{Question}
\newcounter{claimcount}
\theoremstyle{remark}
\definecolor{mydarkgreen}{rgb}{0.0, 0.6, 0.0}
\title{On the existence of factors intersecting sets of cycles in regular graphs}
\author{J. Goedgebeur$^{1,2}$\thanks{Supported by Internal Funds of KU Leuven and a grant of the Research Foundation Flanders (FWO) with grant number G0AGX24N.}, D. Mattiolo$^1$\thanks{Supported by a Postdoctoral Fellowship of the Research Foundation Flanders (FWO) with grant number 1268323N.
}, G. Mazzuoccolo$^3$, \\ J. Renders$^{1*}$, L. Toffanetti$^3$, I. H. Wolf$^4$\thanks{Supported by a grant of the Deutsche Forschungsgemeinschaft (DFG) with grant number 445863039} \\
	\footnotesize $^1$ Department of Computer Science, KU Leuven Kulak, 8500 Kortrijk, Belgium.\\
 	\footnotesize $^2$ Department of Applied Mathematics, Computer Science and Statistics, \\ \footnotesize Ghent University, 9000 Ghent, Belgium.\\
  \footnotesize
 $^3$ Dipartimento di Scienze Fisiche, Informatiche e Matematiche,\\
 \footnotesize Universit\`a di Modena e Reggio Emilia, 41125 Modena, Italy.
\\
    \footnotesize $^4$ Department of Mathematics, Paderborn University, 33098 Paderborn,
		Germany.}
\date{}
\begin{document}

\maketitle

\begin{abstract}
A recent result by Kardoš, Máčajová and Zerafa
[J.\ Comb.\ Theory, Ser.\ B. 160 (2023) 1--14]  related to the famous Berge-Fulkerson conjecture implies that given an arbitrary set of odd pairwise edge-disjoint cycles, say $\mathcal O$, in a bridgeless cubic graph, there exists a $1$-factor intersecting all cycles in $\mathcal O$ in at least one edge. This remarkable result opens up natural generalizations in the case of an $r$-regular graph $G$ and a $t$-factor $F$, with $r$ and $t$ being positive integers. 
In this paper, we start the study of this problem by proving necessary and sufficient conditions on $G$, $t$ and $r$ to assure the existence of a suitable $F$ for any possible choice of the set $\mathcal O$. 
First of all, we show that $G$ needs to be $2$-connected. Under this additional assumption, we highlight how the ratio $\frac{t}{r}$ seems to play a crucial role in assuring the existence of a $t$-factor $F$ with the required properties by proving that $\frac{t}{r} \geq \frac{1}{3}$ is a further necessary condition. We suspect that this condition is also sufficient, and we confirm it in the case $\frac{t}{r}=\frac{1}{3}$, generalizing the case $t=1$ and $r=3$ proved by Kardoš, Máčajová, Zerafa, and in the case $\frac{t}{r}=\frac{1}{2}$ with $t$ even. Finally, we provide further results for the case where even cycles are included.
\end{abstract}

{\bf Keywords:} regular graph; factor; cycle.

\section{Introduction}

We consider finite graphs without loops that may have parallel edges. The main question we study in this paper can be briefly stated as follows: \\

{\it "Under what conditions, given an $r$-regular graph $G$ and an arbitrary set of pairwise edge-disjoint cycles of $G$, can we guarantee the existence of  a $t$-factor of $G$ that intersects each of these cycles in at least one edge?"}\\

\noindent where a {\it cycle} is a connected $2$-regular subgraph of $G$.

First of all, we explain how a particular instance of this problem, solved in~\cite{KARDOS20231}, is related to the following long-standing conjecture which dates back to 1971.

\begin{con}[Fulkerson~\cite{fulkerson1971blocking}]\label{con:BFC}
Every bridgeless cubic graph has six perfect matchings such that each edge belongs to exactly two of them.
\end{con}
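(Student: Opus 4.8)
The statement is the Berge--Fulkerson conjecture, open since 1971, so what follows is an outline of the most natural line of attack together with an honest account of where it stalls, rather than a route I expect to complete. The plan is to begin from the fractional relaxation, which is fully understood, and to try to integralize it. By Edmonds' description of the perfect matching polytope, the uniform vector assigning weight $\frac13$ to every edge lies in the perfect matching polytope of any bridgeless cubic graph $G$: nonnegativity is immediate, each degree equation reads $3\cdot\frac13=1$, and every odd cut $\delta(S)$ satisfies $|\delta(S)|\equiv|S|\pmod 2$, so for odd $|S|$ bridgelessness forces $|\delta(S)|\ge 3$ and hence $\sum_{e\in\delta(S)}\frac13\ge 1$. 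Consequently $\tfrac13\mathbf 1$ is a convex combination $\sum_i\lambda_i\chi^{M_i}$ of perfect matchings, and scaling by $6$ writes the constant vector $2\cdot\mathbf 1$ as a nonnegative rational combination of perfect matchings whose coefficients sum to $6$ --- exactly the ``fractional Fulkerson cover''.

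Before attacking the integral version I would discard the easy cases. If $G$ is $3$-edge-colorable, a proper $3$-edge-coloring partitions $E(G)$ into three perfect matchings $M_1,M_2,M_3$, and taking each of them twice already gives six perfect matchings covering every edge exactly twice; so it suffices to treat snarks, the non-$3$-edge-colorable bridgeless cubic graphs, and standard cut-reductions let one assume cyclic $4$-edge-connectivity. From there the natural strategy is structural: cut a snark along its small edge-cuts, solve the problem on the pieces while prescribing how the six matchings cross the cut, and glue the partial covers together, using the Petersen graph's explicit Fulkerson cover as the base case.

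The hard part is precisely the passage from the fractional certificate to an integral cover with exactly six matchings: the convex decomposition above may involve many matchings with large denominators, and no rounding is known that collapses it to six while preserving the ``each edge twice'' condition, nor is there a gluing scheme that controls how matchings recombine across a decomposition uniformly over all snarks. This is exactly the gap that has kept the conjecture open. I would regard the Kardoš--Máčajová--Zerafa result cited above --- that a $1$-factor can be forced to meet every member of a prescribed family of odd edge-disjoint cycles --- as the kind of fine control over perfect matchings that any eventual proof seems to need, and I would try to iterate such statements to pin down the six matchings one at a time.
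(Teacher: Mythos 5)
The statement you were asked to prove is not a theorem of this paper at all: it is Conjecture~\ref{con:BFC}, the Berge--Fulkerson conjecture, which the paper states without proof because it has been open since 1971. So there is no ``paper's own proof'' to compare against, and your submission, by your own honest account, is not a proof either. What you wrote is mathematically sound as far as it goes: the verification that $\tfrac13\mathbf 1$ lies in the perfect matching polytope (nonnegativity, the degree equations, and the parity argument $|\delta(S)|\equiv|S|\pmod 2$ forcing $|\delta(S)|\ge 3$ on odd cuts of a bridgeless cubic graph) is the standard and correct derivation of the fractional Fulkerson cover from Edmonds' theorem; the observation that a proper $3$-edge-coloring yields a cover by doubling the three color classes is also correct, as is the resulting reduction to snarks (the known reduction, cited in the paper as~\cite{ReductionBFC}, in fact reaches cyclically $5$-edge-connected snarks, slightly stronger than the cyclic $4$-edge-connectivity you invoke).

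The genuine gap is the one you name yourself: no procedure is known that converts the fractional certificate into an integral cover by exactly six perfect matchings, and no gluing scheme across small cuts is known to work uniformly over all snarks. Since that step is precisely the content of the conjecture, your text is a survey of the difficulty, not a proof, and it should not be presented as a proof attempt that merely ``stalls''---the stall is the open problem itself. Note also how the paper actually uses this statement: it is never proved or assumed; it serves only as motivation for the chain of weaker Conjectures~\ref{con:FRC}, \ref{con:3graph_odd_cut} and~\ref{con:S4C}, the last of which, proved by Kardoš, Máčajová and Zerafa and restated as Theorem~\ref{theo:KMZ_odd_circuits}, is the actual starting point of the paper's results. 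Your closing suggestion of iterating KMZ-type control over perfect matchings to pin down six matchings one at a time is a reasonable research direction, but neither the paper nor your proposal carries it out.
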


This conjecture was first proposed by Berge, but Fulkerson~\cite{fulkerson1971blocking} was the one who put it into print (cf.~\cite{seymour1979multi}). Thus, Conjecture~\ref{con:BFC} is also called the Berge-Fulkerson Conjecture. 

It is proved in \cite{ReductionBFC} that Conjecture~\ref{con:BFC} reduces to non-$3$-edge-colorable cubic graphs which are cyclically $5$-edge-connected. Moreover, with the help of a computer, the conjecture was verified for cubic graphs of order at most 36~\cite{brinkmann2013generation}, which can be seen as an indication that the conjecture might be true in general. Nevertheless, a general solution seems to be far away. 

Hence, in order to make further progress, the focus has shifted to weaker statements. The following three conjectures are all implied by the Berge-Fulkerson Conjecture and decrease in their strength, i.e.\ each conjecture is implied by the previous.

\begin{con}[Fan, Raspaud~\cite{fan1994fulkerson}]\label{con:FRC}
Every bridgeless cubic graph has three perfect matchings with an empty intersection.
\end{con}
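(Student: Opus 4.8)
The final statement is the Fan--Raspaud conjecture (Conjecture~\ref{con:FRC}), a well-known weakening of Berge--Fulkerson that is itself still open; so any honest ``proof'' can only be a conditional argument or a plan whose crux is the open obstruction. The cleanest conditional derivation is the implication already announced in the excerpt: if $G$ carries the six perfect matchings $M_1,\dots,M_6$ of Conjecture~\ref{con:BFC}, then every edge lies in exactly two of them, so no edge lies in three of them; hence \emph{any} three of the six matchings already have empty intersection, and the conclusion is immediate. The first thing I would record, therefore, is this one-line derivation of Fan--Raspaud from Berge--Fulkerson, which makes precise that the conjecture demands far less than a full Berge--Fulkerson cover.

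For an unconditional attack I would first dispose of the trivial case. If $G$ is $3$-edge-colorable, a proper $3$-edge-coloring partitions $E(G)$ into three perfect matchings that are pairwise disjoint, so their common intersection is certainly empty; hence one may assume $G$ is a snark, and, arguing as in the reduction recorded for Conjecture~\ref{con:BFC}, one would hope to restrict further to cyclically $5$-edge-connected cubic graphs. For such a $G$ the plan is to start from two perfect matchings $M_1,M_2$, set $I=M_1\cap M_2$, and search for a third perfect matching $M_3$ with $M_3\cap I=\emptyset$; equivalently, one wants a perfect matching of $G$ contained in $E(G)\setminus I$, that is, a perfect matching of $G-I$, whose existence is governed by the Tutte condition. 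The freedom here lies in the choice of $M_1,M_2$: one would try to make $I$ small, ideally confined to a single edge-cut, so that deleting the edges of $I$ leaves $G-I$ free of any Tutte obstruction.

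The hard part---and precisely the reason the conjecture is still open---is this last step: guaranteeing a perfect matching that simultaneously avoids \emph{all} of $I$. Avoiding any single prescribed edge is routine in a bridgeless cubic graph, but the set $I$ can be spread across the graph, and the global parity obstructions peculiar to snarks are exactly what block a uniform construction. The natural tools would be a rerouting or augmenting argument along the even cycles of $M_1\triangle M_2$, or a nowhere-zero-flow reformulation that re-expresses ``three matchings with empty intersection'' as the existence of a suitable flow or coloring; but making either work for every cyclically highly connected snark is the obstruction that no present technique overcomes, which is why only partial results for restricted families are currently known.
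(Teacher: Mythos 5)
This statement is not proved in the paper at all: Conjecture~\ref{con:FRC} (Fan--Raspaud) appears there only as an open conjecture, cited as the middle link in a chain of successively weaker statements implied by the Berge--Fulkerson Conjecture~\ref{con:BFC}, and the only member of that chain that has been settled is the weakest one, Conjecture~\ref{con:S4C}, via the Kardoš--Máčajová--Zerafa theorem. You correctly recognized this, so there is no paper proof to compare against and no obligation on you to produce one. What you do supply is accurate as far as it goes: the one-line derivation of Fan--Raspaud from Berge--Fulkerson (an edge in three of the six matchings would lie in at least three, contradicting ``exactly two'') is exactly the implication the paper alludes to when it says each conjecture in the list is implied by the previous one, and the observation that $3$-edge-colorable cubic graphs satisfy the conjecture trivially (three pairwise disjoint color classes) is also correct and mirrors the paper's own remark, made for the case $r=3$, $t=1$, that the difficulty lives entirely in the non-$3$-edge-colorable graphs. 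Your unconditional plan---choose $M_1,M_2$ with small intersection $I$ and seek a perfect matching of $G-I$---is a reasonable framing, but as you yourself note, guaranteeing a perfect matching avoiding all of $I$ simultaneously is precisely the open obstruction; no step of your sketch closes it, so the proposal should be read as a correct conditional reduction plus an honest statement of where the problem remains open, not as a proof. That assessment agrees with the paper's treatment of the statement.
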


\begin{con}[Máčajová, Škoviera~\cite{MACAJOVA2005112}, see also~\cite{Kaiser2010}]\label{con:3graph_odd_cut}
Every bridgeless cubic graph has two perfect matchings such that their intersection does not contain an edge-cut of odd cardinality.
\end{con}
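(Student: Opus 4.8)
The plan is to first dispose of the easy cases and then isolate the genuinely hard one. If $G$ is $3$-edge-colorable, pick two color classes as $M_1,M_2$; then $M_1\cap M_2=\emptyset$, which contains no edge-cut at all, so the statement holds trivially. Hence we may assume $G$ is a snark. The key elementary fact I would record first is a parity lemma: for any perfect matching $M$ and any vertex set $S$ in a cubic graph, $|M\cap\partial(S)|\equiv|S|\pmod 2$ and $|\partial(S)|\equiv|S|\pmod 2$; in particular, every odd edge-cut $\partial(S)$ has $|S|$ odd, and every perfect matching meets such a cut in an odd (hence nonzero) number of edges.

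This parity lemma yields the clean conceptual route, namely that Conjecture~\ref{con:FRC} implies Conjecture~\ref{con:3graph_odd_cut}. Indeed, take three perfect matchings $M_1,M_2,M_3$ with $M_1\cap M_2\cap M_3=\emptyset$, and suppose for contradiction that some odd cut $\partial(S)$ is contained in $M_1\cap M_2$. Since $|S|$ is odd, $M_3$ meets $\partial(S)$ in at least one edge $e$; then $e\in M_1\cap M_2\cap M_3$, a contradiction. So a successful attack on Conjecture~\ref{con:FRC} would settle the statement. As that conjecture is itself open, for a direct proof I would instead attempt a structural reduction paralleling the Berge--Fulkerson reduction of~\cite{ReductionBFC}: show that if $G$ has a cyclic edge-cut of size at most $4$, then suitable pairs of matchings on the smaller pieces can be glued across the cut into a pair for $G$ whose common edges still avoid odd cuts, with the parity lemma controlling how many cut edges each matching is forced to use. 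This would reduce the problem to cyclically $5$-edge-connected snarks.

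For the base case I would look for two matchings whose intersection is forced to be cut-free. Concretely, fix a perfect matching $M_1$ and observe that any odd cut contained in $M_1\cap M_2$ is automatically contained in $M_1$; hence it suffices to choose $M_2$ omitting at least one edge of every odd cut $\partial(S)\subseteq M_1$. For each such cut the $2$-factor $G-M_1$ lies entirely on the two sides of $\partial(S)$ (no $2$-factor edge can cross a cut that is contained in $M_1$), and it is this rigid cycle structure that I would exploit to perform an alternating modification producing $M_2$. The main obstacle, and the reason the statement remains conjectural, is exactly this joint requirement: Petersen's theorem and the result of~\cite{KARDOS20231} supply individual perfect matchings with prescribed behaviour against a family of cycles, but none of the available tools control a single pair of matchings simultaneously against all the (potentially exponentially many) odd cuts, and the problem sits at the weakest end of the chain $\ref{con:BFC}\Rightarrow\ref{con:FRC}\Rightarrow\ref{con:3graph_odd_cut}$, which is still unresolved.
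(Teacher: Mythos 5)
The statement you were asked to prove is not a theorem of the paper at all: it is Conjecture~\ref{con:3graph_odd_cut}, which the paper leaves open. The paper only records, without proof, that it sits in the chain Conjecture~\ref{con:BFC} $\Rightarrow$ Conjecture~\ref{con:FRC} $\Rightarrow$ Conjecture~\ref{con:3graph_odd_cut} $\Rightarrow$ Conjecture~\ref{con:S4C}, and that only the weakest member, Conjecture~\ref{con:S4C}, is known to hold (Theorem~\ref{theo:KMZ_odd_circuits}). Your proposal correctly identifies this status, and the portions of it that are actual arguments are sound: the $3$-edge-colorable case is trivial because two distinct color classes have empty intersection; your parity lemma ($|M\cap\partial(S)|\equiv|S|\equiv|\partial(S)|\pmod 2$ for a perfect matching $M$ of a cubic graph) is correct; and your short deduction that Conjecture~\ref{con:FRC} implies Conjecture~\ref{con:3graph_odd_cut} is a complete and correct proof of an implication that the paper asserts but never writes out, so that part is a genuine, if modest, contribution.

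As a proof of the statement itself, however, the proposal has a gap --- one you acknowledge yourself. Everything after the reduction to snarks is a program, not an argument: no gluing lemma across cyclic cuts of size at most $4$ is actually proved (and such a reduction is not known to go through for this conjecture, unlike for Conjecture~\ref{con:BFC}), and the ``base case'' idea --- modify the $2$-factor $G-M_1$ along alternating cycles so that the resulting $M_2$ misses an edge of every odd cut contained in $M_1$ --- is precisely where the open difficulty lives, since the odd cuts inside $M_1$ may be numerous and interleaved, and no available tool controls a single pair of matchings against all of them simultaneously. Since the paper contains no proof to compare against, the honest verdict is the one you reach: the verifiable pieces are correct, but the statement itself remains unproved, and your text should be read as a status report and partial result (the FRC implication), not as a proof.
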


\begin{con}[Mazzuoccolo~\cite{MAZZUOCCOLO2013235}]\label{con:S4C}
Every bridgeless cubic graph has two perfect matchings $M_1, M_2$ such that $G-(M_1 \cup M_2)$ is bipartite.
\end{con}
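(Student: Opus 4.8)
The plan is to treat Conjecture~\ref{con:S4C} (the statement that $G$ has two perfect matchings $M_1,M_2$ with $G-(M_1\cup M_2)$ bipartite) by first disposing of the easy case and then reducing the hard case to the stronger statement Conjecture~\ref{con:3graph_odd_cut} that immediately precedes it. If $G$ is $3$-edge-colorable, I would simply fix a proper $3$-edge-coloring with colour classes $C_1,C_2,C_3$ and set $M_1=M_2=C_1$; then $G-(M_1\cup M_2)=C_2\cup C_3$ is a disjoint union of cycles alternating between two colours, i.e.\ a $2$-factor all of whose cycles are even, and hence bipartite. Thus only non-$3$-edge-colorable bridgeless cubic graphs (snarks) remain, and, in analogy with the reduction of the Berge--Fulkerson Conjecture in~\cite{ReductionBFC}, one expects to be able to restrict attention to cyclically $5$-edge-connected snarks; making such a reduction precise would be the first technical step.

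The reformulation I would exploit is that, for a fixed pair $M_1,M_2$, the graph $G-(M_1\cup M_2)$ has maximum degree two: a vertex has degree $2$ if it is saturated by $M_1\cap M_2$ and degree $1$ otherwise. Hence it is a disjoint union of paths and cycles, and it is bipartite if and only if all of its cycles are even; equivalently, there is a bipartition $(S,\bar S)$ of $V(G)$ with $E(G[S])\cup E(G[\bar S])\subseteq M_1\cup M_2$. Since every cycle of $G-(M_1\cup M_2)$ passes only through vertices saturated by $M_1\cap M_2$, I would start from a pair $M_1,M_2$ furnished by Conjecture~\ref{con:3graph_odd_cut}, for which $M_1\cap M_2$ contains no odd edge-cut, and aim to show that an odd cycle surviving in the complement would force $M_1\cap M_2$ to contain an odd edge-cut, contradicting the hypothesis. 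Anchoring the $2$-colouring on the even cycles of the symmetric difference $M_1\triangle M_2$, which carry a canonical alternating colouring, and then extending it would be the natural vehicle for this argument.

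The main obstacle is that this parity correspondence between odd cycles in the complement and odd cuts inside $M_1\cap M_2$ is exactly where the difficulty of the whole chain concentrates: the ``no odd cut'' condition is global, and converting it into a consistent global $2$-colouring is essentially the content of the implication Conjecture~\ref{con:3graph_odd_cut}$\Rightarrow$Conjecture~\ref{con:S4C}. I do not expect an \emph{unconditional} proof to be within reach, because Conjecture~\ref{con:S4C} is known to be equivalent to the Fan--Raspaud Conjecture~\ref{con:FRC} (see~\cite{MAZZUOCCOLO2013235}), which is wide open, so any self-contained argument would resolve Fan--Raspaud as well. The realistic target is therefore the conditional implication from the preceding conjecture; a complementary idea worth pursuing is to invoke the Kardoš--Máčajová--Zerafa theorem that motivates this paper~\cite{KARDOS20231} — which guarantees a $1$-factor meeting every cycle of a prescribed family of pairwise edge-disjoint odd cycles — in order to control directly the odd cycles that could otherwise survive in $G-(M_1\cup M_2)$.
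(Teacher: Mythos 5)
Your main line of attack cannot close: you propose to deduce Conjecture~\ref{con:S4C} from Conjecture~\ref{con:3graph_odd_cut}, but that conjecture is \emph{stronger} and is still open, so a conditional implication from it proves nothing about the statement at hand. Your justification for settling for a conditional result is also based on a false premise: Conjecture~\ref{con:S4C} is \emph{not} known to be equivalent to the Fan--Raspaud Conjecture~\ref{con:FRC}; what is known is only the one-way chain of implications Berge--Fulkerson $\Rightarrow$ Fan--Raspaud $\Rightarrow$ Conjecture~\ref{con:3graph_odd_cut} $\Rightarrow$ Conjecture~\ref{con:S4C}, and no reverse implication appears in~\cite{MAZZUOCCOLO2013235} (Mazzuoccolo's equivalence result concerns the Berge and Berge--Fulkerson conjectures, a different pair of statements). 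In particular, an unconditional proof is not out of reach at all: the statement is now a theorem, proved by Kardo\v{s}, M\'a\v{c}ajov\'a and Zerafa~\cite{KARDOS20231}, and the paper records it as such, with Theorem~\ref{theo:KMZ_odd_circuits} being an equivalent formulation of their result.

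The irony is that the correct argument is exactly the ``complementary idea'' you relegate to your last sentence, and it takes three lines once Theorem~\ref{theo:KMZ_odd_circuits} is available. Work componentwise; a connected bridgeless cubic graph is $2$-connected. By Petersen's theorem, $G$ has a perfect matching $M_1$. The complement $G-M_1$ is a $2$-factor, whose cycles are pairwise vertex-disjoint, hence edge-disjoint; let $\mathcal{O}$ be the set of its odd cycles. Theorem~\ref{theo:KMZ_odd_circuits} yields a perfect matching $M_2$ with $E(M_2)\cap E(O)\neq\emptyset$ for every $O\in\mathcal{O}$. Then $G-(M_1\cup M_2)=(G-M_1)-M_2$ is a disjoint union of paths, together with those cycles of $G-M_1$ untouched by $M_2$, all of which are even; hence it is bipartite. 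Your opening reduction for $3$-edge-colorable graphs is correct but becomes unnecessary once this derivation is in place, and the speculative reduction to cyclically $5$-edge-connected snarks is likewise not needed.
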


Very recently, the weakest of these conjectures (i.e.\ Conjecture~\ref{con:S4C}) was proved by Kardoš, Máčajová and Zerafa~\cite{KARDOS20231}. 
An equivalent formulation of their main result is the following theorem.

\begin{theo}[Kardoš, Máčajová, Zerafa~\cite{KARDOS20231}]
\label{theo:KMZ_odd_circuits}
Let $G$ be a 2-connected cubic graph. Let $\mathcal{O}$ be a set of pairwise edge-disjoint odd cycles of $G$ and let $e \in E(G)$. Then, there exists a 1-factor $F$ of $G$ such that $e \in E(F)$ and $E(F) \cap E(O) \neq \emptyset$ for every $O \in \mathcal{O}$.
\end{theo}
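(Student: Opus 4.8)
The plan is to dualize the problem to the complementary $2$-factor. In a cubic graph a $1$-factor $F$ and its complement $H := G - E(F)$ partition the edges, and $H$ is a $2$-factor, i.e.\ a disjoint union of cycles. Requiring $e \in E(F)$ is the same as requiring $e \notin E(H)$, while requiring $E(F) \cap E(O) \neq \emptyset$ is equivalent to requiring that $O$ is \emph{not} a connected component of $H$: if $F$ avoided $O$ entirely, then all edges of $O$ would lie in the $2$-regular graph $H$, forcing the cycle $O$ to be one of its components, and the converse is clear. Thus the goal becomes: find a $2$-factor $H$ with $e \notin E(H)$ such that no $O \in \mathcal{O}$ is a component of $H$.

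I would first try induction on $|\mathcal{O}|$. The base case $\mathcal{O} = \emptyset$ only asks for a $1$-factor through $e$, which exists since every edge of a bridgeless cubic graph lies in a $1$-factor. For the inductive step, take $O \in \mathcal{O}$, apply the hypothesis to $\mathcal{O} \setminus \{O\}$ to obtain a factor $F$ through $e$ hitting every other cycle, and assume $E(F) \cap E(O) = \emptyset$, so that $O$ is a component of $H = G - E(F)$. Since $O$ is odd, a parity count gives that the edge cut $\partial(V(O))$ has odd cardinality, and as $O$ is a component of $H$ every edge of this cut lies in $F$; by $2$-edge-connectivity the cut therefore contains at least $3$ matching edges. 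The idea is then to modify $F$ along an $F$-alternating structure that threads through these cut edges and at least one edge of $O$, so that replacing $F$ by the symmetric difference forces an edge of $O$ into the new factor and destroys $O$ as a component, while $2$-connectivity lets us keep the alternating structure away from $e$.

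The main obstacle is controlling interference: such a flip may remove the matching edges that were the sole intersection of $F$ with some previously satisfied cycle $O' \in \mathcal{O} \setminus \{O\}$, turning $O'$ back into a component of the $2$-factor, so the sequential one-cycle-at-a-time argument does not close. Overcoming this is exactly what makes the statement deep. To handle it I would abandon the sequential flip and instead encode all constraints simultaneously, setting up the search for $H$ as a parity/feasibility problem (a suitable $T$-join or integer flow whose solvability is governed by an odd-cut condition in the spirit of Tutte--Berge), in which each cycle $O$ and the edge $e$ impose one local parity constraint; one then verifies global feasibility by using the $2$-edge-connectivity of $G$ to show that every relevant odd cut is large enough to be crossed consistently. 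Proving that the local constraints arising from an arbitrary family of edge-disjoint odd cycles never obstruct one another is the heart of the matter and the step I expect to demand the most care.
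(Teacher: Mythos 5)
There is a genuine gap, and it is one you partly concede yourself. First, for context: this paper does not prove Theorem~\ref{theo:KMZ_odd_circuits} at all; it imports it from Kardoš, Máčajová and Zerafa, where it is the main result of a full-length article resolving Conjecture~\ref{con:S4C}. Your dualization is correct as far as it goes: with $H=G-E(F)$, requiring $E(F)\cap E(O)\neq\emptyset$ is indeed equivalent to requiring that $O$ not be a component of the $2$-factor $H$, and your parity count (if $O$ were a component of $H$, then $\partial_G(V(O))\subseteq E(F)$ and $|\partial_G(V(O))|$ is odd, hence at least $3$) is sound. But from that point on the proposal is a plan rather than a proof. The inductive one-cycle-at-a-time flip, as you note, does not close, because an alternating modification that forces an edge of $O$ into the factor can simultaneously evacuate the factor from some other cycle $O'$ that was previously hit; nothing in the proposal controls this, and no potential function, ordering, or minimality argument is offered to break the circularity.

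The fallback you propose is worse than incomplete: it rests on a claim that is false. Hitting an odd cycle with a perfect matching is \emph{not} a local parity constraint, so it cannot be encoded in $T$-join or odd-cut feasibility language. Concretely, for \emph{any} perfect matching $F$ of a cubic graph and \emph{any} odd cycle $O$, counting the vertices of $V(O)$ matched inside versus outside gives $|F\cap \partial_G(V(O))|\equiv |V(O)|\equiv 1 \pmod 2$, entirely independently of whether $F\cap E(O)$ is empty. Thus the condition $E(F)\cap E(O)\neq\emptyset$ is a covering condition invisible to parity; the Tutte--Berge/$T$-join machinery you invoke cannot distinguish a factor that hits $O$ from one that misses it. What you label ``the heart of the matter and the step I expect to demand the most care'' is, in fact, the entire theorem: the known proof requires a long, delicate structural analysis (a minimal counterexample argument with extensive case distinctions), not a feasibility check against odd cuts. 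As it stands, the proposal establishes only the (standard) reformulation and the easy base case $\mathcal{O}=\emptyset$.
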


It is natural to ask whether this result can be extended to graphs of higher degree, thus returning to the initial question we proposed. 

In the present paper, we consider the following instance of our general question, where, in analogy to Theorem~\ref{theo:KMZ_odd_circuits}, we limit our attention to cycles of odd length (note that we do not necessarily require to prescribe an edge of $F$). Results concerning arbitrary cycles can be found in Section~\ref{sec:arbitrary_cycles}.

\begin{ques}\label{ques:t-factor_destroying_odd_circuits}
 Let $r,t$ be integers with $1 \leq t\leq r-2$. Is it true that for every $2$-connected $r$-regular graph $G$ and every set $\mathcal{O}$ of pairwise edge-disjoint odd cycles of $G$ there is a $t$-factor $F$ of $G$ such that $E(F) \cap E(O) \neq \emptyset$ for every $O \in \mathcal{O}$?
\end{ques}

Note that there are combinations of $r$ and $t$ such that there exist $r$-regular graphs that do not have a $t$-factor. In this case, Question~\ref{ques:t-factor_destroying_odd_circuits} turns out to be trivial. Hence, from now on we focus on $r$-regular graphs admitting a $t$-factor (see~\cite{BSW_reg_factors_of_reg_graph}
for necessary and sufficient conditions for the existence of a $t$-factor in an $r$-regular graph).
Furthermore, in Section~\ref{sec:2connectivity} we prove that the assumption that $G$ has no cut-vertices is necessary to assure a positive answer. 

Our main results suggest that the key factor in the study of Question~\ref{ques:t-factor_destroying_odd_circuits} is the ratio $\frac{t}{r}$.
In particular, we show in Section~\ref{sec:tr>13} that a positive answer is possible only if $\frac{t}{r}\geq \frac{1}{3}$ holds. Moreover, in Section~\ref{sec:13} and Section~\ref{sec:12} we show that such a condition is also sufficient when $\frac{t}{r}=\frac{1}{3}$, and when $\frac{t}{r}=\frac{1}{2}$, where $t$ is even, respectively.
We leave it as an intriguing open problem to prove or disprove that the necessary condition $\frac{t}{r} \geq \frac{1}{3}$ is also sufficient in general.

The following two theorems are our main results.

\begin{restatable}{theo}{thmgeneralisationKMZoddcircuits}
\label{theo:generalisation_KMZ_odd_circuits}
Let $t\geq 1$ be an integer and let $G$ be a $2$-connected $3t$-regular graph. Let $\mathcal{O}$ be a set of pairwise edge-disjoint odd cycles of $G$ and let $e \in E(G)$. Then, there exists a $t$-factor $F$ of $G$ such that $e \in E(F)$ and $E(F) \cap E(O)$ is a non-empty matching of $G$ for every $O \in \mathcal{O}$.
\end{restatable}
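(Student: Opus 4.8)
The plan is to reduce to the cubic case and invoke Theorem~\ref{theo:KMZ_odd_circuits}. I would build an auxiliary cubic graph $H$ from $G$ by \emph{splitting} every vertex: a vertex $v$ of degree $3t$ has its $3t$ incident edges partitioned into $t$ triples, and $v$ is replaced by $t$ new vertices $v_1,\dots,v_t$ of degree $3$, the $i$-th receiving the $i$-th triple. No internal edges are added, so every edge of $G$ survives as an edge of $H$ and $H$ is cubic. The key bookkeeping observation is that perfect matchings of $H$ correspond exactly to $t$-factors of $G$: a perfect matching $M$ selects one edge at each $v_i$, hence exactly $t$ edges at $v$, so its image $F:=\{\,uv\in E(G): \text{the corresponding edge of }H\text{ lies in }M\,\}$ is a $t$-factor of $G$, and conversely.

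The splitting must be chosen to respect the cycles. For a fixed $O\in\mathcal{O}$ and a vertex $v\in V(O)$, I would place the two edges of $O$ incident with $v$ into the \emph{same} triple; then $O$ passes through a single copy $v_i$ and its image in $H$ is again a connected odd cycle of the same length. Carrying this out for all cycles simultaneously turns $\mathcal{O}$ into a set $\mathcal{O}'$ of pairwise edge-disjoint odd cycles of $H$ (edge-disjointness is inherited, since distinct edges of $G$ map to distinct edges of $H$), and it sends the prescribed edge $e$ to a well-defined edge $e'$ of $H$. Applying Theorem~\ref{theo:KMZ_odd_circuits} to $(H,\mathcal{O}',e')$ yields a perfect matching $M$ with $e'\in E(M)$ that meets every cycle of $\mathcal{O}'$. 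Projecting $M$ back gives the desired $F$: one has $e\in E(F)$ and $E(F)\cap E(O)\neq\emptyset$ for each $O$, and the matching property follows because at each $v\in V(O)$ the two edges of $O$ live in the triple of a single copy $v_i$, which $M$ meets in at most one edge; hence at most one edge of $O$ at $v$ lies in $F$, so $E(F)\cap E(O)$ is a matching of $G$.

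The hard part is that this requires, at every vertex $v$, assigning each cycle through $v$ its own triple. Since a triple has only three slots it can host at most one cycle-pair (two pairs would need four edges), so the construction goes through verbatim only when at most $t$ edge-disjoint cycles of $\mathcal{O}$ pass through each vertex. This already covers the cubic case $t=1$, but it can fail as soon as $t\geq 2$: up to $\lfloor 3t/2\rfloor$ pairwise edge-disjoint cycles may meet at a single vertex, and when more than $t$ of them do so no plain triple-splitting keeps all of them intact. I expect this to be the central obstacle. To resolve it I would replace the naive split at such a vertex by a small cubic gadget containing internal edges, through which several cycles can be routed while the correspondence ``exactly $t$ external edges chosen by every perfect matching'' is preserved—equivalently, while the projected subgraph is forced to be genuinely $t$-regular rather than merely degree-bounded (internal matching edges must not be allowed to reduce the external count below $t$).

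A second, more routine, point that must be handled in parallel is the connectivity hypothesis of Theorem~\ref{theo:KMZ_odd_circuits}: the split graph $H$ need not be $2$-connected even though $G$ is, so the partition into triples (and the gadgets above) must be chosen to avoid creating cut-vertices or bridges in $H$. I would argue this using the $2$-connectivity of $G$ together with a careful local choice of how incident edges are grouped. Engineering the gadget so that it routes all cycles through a crowded vertex, preserves the exact $t$-regular correspondence, and keeps $H$ $2$-connected is where the real work lies; once such a construction is in place, the remaining verifications (edge-disjointness and oddness of the images, the matching property, and $e\in E(F)$) are immediate from the description above.
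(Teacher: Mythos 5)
Your reduction skeleton is exactly the paper's: expand every vertex of $G$ into a small cubic gadget so that the two edges of each cycle of $\mathcal{O}$ at a vertex become adjacent in the resulting cubic graph $G'$, apply Theorem~\ref{theo:KMZ_odd_circuits} to $G'$, and project the perfect matching back; you also correctly diagnose both obstructions (a naive partition of $\partial_G(v)$ into $t$ independent triples can host only $t$ cycle-pairs while up to $\lfloor 3t/2\rfloor$ edge-disjoint cycles may pass through $v$, and connectivity of the expanded graph must be ensured). However, there is a genuine gap at precisely the point you defer as ``where the real work lies'': you never construct a gadget with internal edges that simultaneously (a) offers enough \emph{adjacent} pairs of attachment slots to route all cycle-pairs at a crowded vertex, and (b) forces every perfect matching of the cubic graph to use \emph{exactly} $t$ of the $3t$ external edges at each gadget. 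Property (b) is not a routine bookkeeping condition: an arbitrary connected cubic gadget with internal edges admits perfect matchings using varying numbers of boundary edges, and without an exact count the projected subgraph is not $t$-regular and the whole correspondence collapses. Naming the obstacle is not the same as overcoming it, so as written the proposal proves the theorem only in the case (essentially $t=1$) where the naive triple-splitting suffices.

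For comparison, the paper fills this gap with the inductively defined family $\mathcal{T}^t$ of trees built from copies of $K_{1,3}$ (identify a leaf of a tree in $\mathcal{T}^{t-1}$ with one leaf from each of two new claws); each such tree has $3t$ leaves, and the vertex $v$ is replaced by $T-Leaf(T)$ with the edges of $\partial_G(v)$ attached at the former pendant-edge positions. Requirement (a) is met by Lemma~\ref{obs:adjacent_pendant_edges}: one can choose $T$ with at most one lonely pendant edge, so the $3t$ slots group into adjacent pairs (plus at most one leftover), enough to host every possible configuration of cycle-pairs at $v$; requirement (b) is Lemma~\ref{obs:number_of_leafs}, proved by induction along the construction of $\mathcal{T}^t$: any matching covering all internal vertices of $T$ matches exactly $t$ leaves, which is what makes $F=M\cap E(G)$ a genuine $t$-factor. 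Your connectivity worry also evaporates in this setting, since the gadget is a connected tree and $2$-connectivity of $G$ transfers directly to $G'$ without any careful choice of grouping. So your plan is salvageable, but completing it amounts to rediscovering these two lemmas, which constitute the actual content of the paper's proof beyond the cubic case.
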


\begin{restatable}{theo}{thmfourkreg}
\label{thm:4k-reg_case_Odd_circuits}
Let $t \geq 2$ be an even integer and let $G$ be a $2$-connected $2t$-regular graph. Let $\mathcal{O}$ be a set of pairwise edge-disjoint odd cycles of $G$. Then, there exists a $t$-factor $F$ of $G$ such that $E(O) \cap E(F) \neq \emptyset$ and $E(O) \cap (E(G) \setminus E(F)) \neq \emptyset$ for every $O \in \mathcal{O}$.
\end{restatable}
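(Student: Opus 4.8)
The plan is to reformulate the statement as a balanced $2$-edge-colouring problem. Since $G$ is $2t$-regular, a spanning subgraph $F$ is a $t$-factor if and only if its complement $E(G)\setminus E(F)$ is a $t$-factor as well. Hence, writing the edges of $F$ as \emph{red} and the remaining edges as \emph{blue}, we are looking for a colouring in which every vertex is incident with exactly $t$ red and $t$ blue edges and in which no $O\in\mathcal{O}$ is monochromatic: the two required conditions $E(O)\cap E(F)\neq\emptyset$ and $E(O)\cap(E(G)\setminus E(F))\neq\emptyset$ say precisely that each odd cycle receives both colours. This self-complementary reformulation is exactly what makes the ratio $\frac{t}{r}=\frac{1}{2}$ special, and it is the reason the problem is symmetric in $F$ and its complement.

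A starting balanced colouring is easy to produce, and this is where the hypothesis that $t$ is even enters. As $G$ is $2t$-regular, Petersen's $2$-factor theorem lets us decompose $E(G)$ into $t$ edge-disjoint $2$-factors $C_1,\dots,C_t$; colouring the edges of $t/2$ of them red and the other $t/2$ blue yields a colouring that is balanced at every vertex, so the red edges form a $t$-factor. For $t$ odd such a clean split is unavailable, which matches the restriction in the statement. The residual freedom is twofold: the choice of the $2$-factorization, and the choice of which half of the $C_i$ is coloured red. With respect to a fixed colouring, the moves that preserve balance are the Kempe-type swaps along \emph{alternating} even subgraphs, i.e.\ closed walks whose edges alternate red and blue, since recolouring such a subgraph leaves every vertex with the same number of incident red and blue edges.

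The heart of the argument is to upgrade a balanced colouring to one in which no $O\in\mathcal{O}$ is monochromatic. I would argue by contradiction from an extremal colouring: among all balanced colourings choose one, say with red $t$-factor $F$, minimising the number of monochromatic cycles of $\mathcal{O}$, and suppose some $O$ is monochromatic, say entirely red. Using the $2$-connectivity of $G$ and the fact that the blue graph is also a $t$-factor (hence spanning and of positive degree everywhere, as $t\geq 2$), I would locate an alternating cycle $Q$ meeting $E(O)$ in a single edge; swapping colours along $Q$ turns that edge blue and thereby splits $O$. The oddness of the members of $\mathcal{O}$, together with their being pairwise edge-disjoint, is what one exploits to ensure that $Q$ can be chosen so that the swap does not create a new monochromatic cycle, contradicting minimality.

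The main obstacle is exactly this last step: repairing one monochromatic odd cycle without spoiling another. Local swaps interact, because an alternating cycle $Q$ used to split $O$ may run through edges of other cycles of $\mathcal{O}$; likewise, a single $2$-factorization together with a balanced choice of red factors need not split every member of $\mathcal{O}$ at once. Controlling these interactions, by combining the parity coming from the oddness of the cycles in $\mathcal{O}$ with the global connectivity of $G$ to guarantee a sufficiently ``clean'' alternating cycle, is the crux, and is where I expect the genuine work of the proof to lie.
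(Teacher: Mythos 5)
Your reformulation as a balanced red/blue colouring, the Petersen $2$-factorization starting point (which is indeed exactly where the parity of $t$ enters), and the observation that swaps along alternating subgraphs preserve balance are all correct. But the proposal has a genuine gap precisely at the step you yourself flag as ``the genuine work'': you never establish that, given a monochromatic $O \in \mathcal{O}$, there exists an alternating cycle $Q$ whose swap splits $O$ \emph{and} does not create a new monochromatic member of $\mathcal{O}$. Nothing in the proposal constrains how $Q$ interacts with the other cycles of $\mathcal{O}$, and the extremal choice of colouring does not by itself produce such a $Q$; the appeal to ``the parity coming from the oddness of the cycles'' is a hope, not an argument. Even the weaker existence claim is unverified: if the swap is performed along an alternating closed walk (as your phrase ``alternating even subgraphs, i.e.\ closed walks'' allows), such a walk may traverse every edge of $O$, in which case the swap recolours $O$ entirely blue and it stays monochromatic; if instead $Q$ is required to be a simple alternating cycle through a prescribed edge of $O$, its existence in a balanced colouring is itself not proved. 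So what is written is a correct reduction of the theorem to an open combinatorial claim, not a proof.

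It is worth noting why this gap is unlikely to be closed by local swap-and-repair arguments: the paper's own proof performs no such induction, but instead imports the full strength of the Kardoš--Máčajová--Zerafa theorem (Theorem~\ref{theo:KMZ_odd_circuits}), i.e.\ the resolution of Conjecture~\ref{con:S4C}. Concretely, the paper first proves Lemma~\ref{lem:orientation_4k-reg_case}: orient a cycle decomposition of $G$ containing $\mathcal{O}$, then reverse the edge set $M \cap E(G)$, where $M$ is a perfect matching --- supplied by Theorem~\ref{theo:KMZ_odd_circuits} --- of an auxiliary cubic graph obtained by expanding each vertex of $G$ into a tree gadget. This reversal set is an element of the cycle space of $G$ meeting every $O \in \mathcal{O}$ in a non-empty matching, which is exactly the kind of globally ``clean'' object your approach would need to conjure; the theorem then follows by splitting vertices according to the orientation into a bipartite $2$-regular graph and pulling back a perfect matching. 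Completing your colouring framework along the lines you sketch would therefore amount to an elementary replacement for the KMZ machinery, and no such argument is given.
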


Note that Theorem~\ref{theo:KMZ_odd_circuits} is a special case of Theorem~\ref{theo:generalisation_KMZ_odd_circuits} for $t=1$.

Finally, in Section~\ref{sec:arbitrary_cycles}, we present some further results for a set $\mathcal{O}$ containing arbitrary cycles (not necessarily odd) and in Section~\ref{sec:concluding_remarks} we conclude the paper with some open problems.

\section{First necessary condition: \texorpdfstring{$2$}{2}-connectivity}\label{sec:2connectivity}

In this section we show that the $2$-connectivity assumption in Theorem~\ref{theo:generalisation_KMZ_odd_circuits} and Theorem~\ref{thm:4k-reg_case_Odd_circuits} is necessary.

We denote by $m_G(u,v)$, or simply $m(u,v)$, the number of edges connecting the vertices $u$ and $v$ in a graph $G$. For a subset $X \subseteq V(G)$, the set of edges with exactly one end-vertex in $X$ is denoted by $\partial_G(X)$. For convenience, if $X$ consists of a single vertex $x$ we use the notation $\partial_G(x)$ by omitting the set-brackets.

\begin{theo}\label{theo:1_connected_examples}
Let $r,t$ be integers with $1 \leq t\leq r-2$. Then, there exists an $r$-regular graph $G$ having a $t$-factor and a set $\mathcal{O}$ of pairwise edge-disjoint odd cycles of $G$ such that every $t$-factor of $G$ is edge-disjoint with at least one cycle of $\mathcal{O}$.
\end{theo}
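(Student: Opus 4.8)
The plan is to build, for each admissible pair $(r,t)$, a connected $r$-regular graph $G$ around a single odd cycle $O$ decorated with pendant gadgets, and to take $\mathcal{O}=\{O\}$: I will arrange that \emph{every} $t$-factor $F$ of $G$ is edge-disjoint from $O$, while $G$ still has a $t$-factor (so the degenerate case is avoided). Because each gadget will be attached to the rest of $G$ through a single vertex of $O$, those vertices are cut-vertices, so $G$ is not $2$-connected, which is exactly what the statement requires. The convenient reformulation is the complement: $E(F)\cap E(O)=\emptyset$ iff the complementary $(r-t)$-factor $\overline F=E(G)\setminus E(F)$ contains all of $E(O)$; since $t\le r-2$ gives $r-t\ge2$, this is consistent with $\overline F$ being $(r-t)$-regular, and the goal becomes to force, at every vertex $o_i$ of $O$, both incident cycle-edges into $\overline F$, equivalently to force all $t$ edges of $F$ at $o_i$ onto its $r-2$ non-cycle (``external'') edges.

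The forcing device is the parity identity $|E(F)\cap\partial_G(X)|\equiv t|X|\pmod 2$, valid for every vertex set $X$ because $t|X|=2|E(F)\cap E(X)|+|E(F)\cap\partial_G(X)|$. When $r$ is odd I can realise the plan with honest bridges (which exist since $r$ is odd). If $t$ is odd, I attach to $o_i$, through $t$ of its external edges, pendant blobs $B_1,\dots,B_t$ of odd order joined by single bridges; each bridge-cut forces its bridge into $F$, and since $o_i$ has $F$-degree only $t$ these $t$ forced edges exhaust its capacity, pushing both cycle-edges at $o_i$ into $\overline F$. If $t$ is even I use the same identity for the $(r-t)$-factor (here $r-t$ is odd): to force an external edge $o_iw$ into $F$ I saturate the endpoint $w$ by attaching $r-t$ odd pendant blobs to $w$ through bridges, each forced into $\overline F$, which uses up the whole $\overline F$-capacity of $w$ and leaves $o_iw\in F$; doing this for $t$ external edges again exhausts $o_i$. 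In both cases the remaining external edges of $o_i$ are forced into $\overline F$ by capacity, the blobs are completed so as to be $r$-regular, and assembling the forced edges with an internal $t$-factor of every blob exhibits a genuine $t$-factor of $G$.

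The hard part is $r$ even. Then $G$ is bridgeless (an even-regular graph has only even edge-cuts), so the bridge argument is unavailable and parity alone only constrains $|E(F)\cap\partial_G(X)|$ modulo $2$ inside cuts of even size. My plan here is to replace each bridge by a \emph{capacity-tight} gadget: attach a blob of odd order through a cut of size two (when $t$ is odd), so that ``$|E(F)\cap\partial|$ is odd'' together with the size-two upper bound pins its number of $F$-edges to exactly one; stacking such gadgets, or their $(r-t)$-analogues at a neighbour, I again try to pin the number of external $F$-edges of $o_i$ to $t$. The delicate points, which I expect to be the real obstruction, are that each size-two gadget consumes two of the only $r-2$ external edges of $o_i$ while forcing just one edge, so for $t$ close to $r/2$ one must design blobs that force \emph{several} edges at once without overspending the degree, and the subcase where both $t$ and $r-t$ are even (so no odd cut exists at all for either factor) forces the pinning to come from a genuine $t$-factor deficiency (a Tutte--Berge/parity-of-components obstruction) rather than from parity. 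Once the count is pinned at every $o_i$, one gets $E(O)\subseteq E(\overline F)$, hence $E(F)\cap E(O)=\emptyset$, for every $t$-factor $F$, which completes the proof.
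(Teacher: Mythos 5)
Your outline for odd $r$ is plausible (bridges plus the parity identity $|E(F)\cap\partial_G(X)|\equiv t|X|\pmod 2$ do force edges the way you claim, and this is close in spirit to the paper's own treatment of odd $r$), but the proposal has a genuine gap exactly where you flag it: $r$ even. For $r$ even and $t$ odd, your size-two-cut gadget forces one $F$-edge while consuming two of the $r-2$ external slots at $o_i$, so exhausting the capacity $t$ needs $2t\le r-2$; the dual trick of saturating a neighbour $w$ needs $2(r-t)\le r-1$; neither covers, for example, $r=6$, $t=3$, and the gadget "forcing several edges at once" is never exhibited. Worse, for $r$ even and $t$ even you offer only the hope that a Tutte--Berge-type deficiency can be engineered, with no construction at all. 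This even--even case is the heart of the theorem (the paper reduces all other parities to it), so it cannot be left as a plan; as it stands the proof is incomplete.

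The missing idea --- and it is how the paper settles precisely the case you are stuck on --- is that parity is not powerless when $t$ is even; it must be channelled through a gadget with ``twin'' vertices. The paper's gadget contains a triangle $zv^1u^1$ in which $v^1$ is joined by $(r-2)/2$ parallel edges to each of two vertices $v^2,v^3$, themselves joined by $(r+2)/2$ parallel edges; degree counting gives $m_F(v^1,v^2)=m_F(v^1,v^3)$ for every $t$-factor $F$, so $t$ even forces $|E(F)\cap\{zv^1,v^1u^1\}|$ to be even, and symmetrically at $u^1$; hence each triangle is either wholly inside $F$ or disjoint from $F$. Taking $r/2$ copies of this gadget and identifying them at a single hub vertex $x$, joined to each copy through two parallel edges, one checks that ``every triangle met'' would force all $r$ edges at $x$ into $F$, contradicting $t\le r-2$. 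Here $\mathcal{O}$ is the set of the $r/2$ triangles, and every $t$-factor misses at least one of them --- note that the theorem only requires each $t$-factor to miss \emph{some} cycle of $\mathcal{O}$, possibly a different one for different factors, which is a weaker target than the one you set yourself (one fixed cycle avoided by all $t$-factors) and is what makes the hub construction work. The remaining parity cases are then short reductions to this one, obtained by attaching paw-shaped gadgets through pairs of parallel edges (for $r$ even, $t$ odd) or through bridges (for $r$ odd), with a separate small construction for $t=1$.
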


\begin{proof}

Let $r,t$ be integers with $1 \leq t\leq r-2$. We argue according to the parity of $r$ and $t$. First, we construct a graph $G_{r}$ and a set ${\mathcal O}_r$ in the case when $r$ and $t$ are even. After that, we obtain all other cases (with the exception of the case $t=1$, which will be treated separately) by adding to $G_r$ some suitable gadgets and choosing the same set ${\mathcal O}_r$ in the subgraph isomorphic to $G_r$ of the resulting graph.

		\underline{Case 1: $r\geq 4$ even, $t \geq 2$ even.}\\
		Let $J$ be the graph shown in Figure~\ref{fig:J}.
		\begin{figure}[!htbp]
		\centering
		\includegraphics[width=7cm]{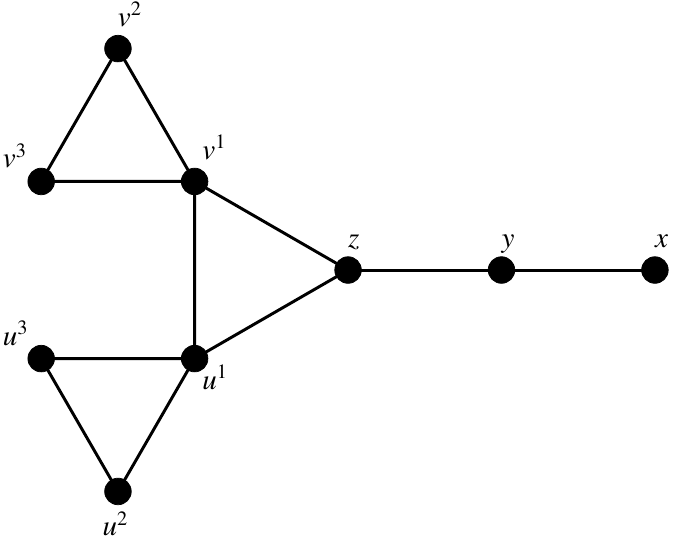}
		\caption{The graph $J$ introduced in the proof of Theorem~\ref{theo:1_connected_examples} (in the case of even $r$ and $t$).}\label{fig:J}
		\end{figure}
		We will refer to the labels in the figure for its vertices and edges. 
		Let $J'$ be the multigraph whose underlying simple graph is $J$ and such that $m(x,y)=2,\ m(y,z)=r-2,\ m(z,v^1)=m(z,u^1)=m(v^1,u^1)=1,\ m(v^1,v^2)=m(v^1,v^3)=m(u^1,u^2)=m(u^1,u^3)=\frac{r-2}{2}$ and $m(u^2,u^3)=m(v^2,v^3)=\frac{r+2}{2}$.	
		Let $G_r$ be the graph obtained as follows. Take $\frac{r}{2}$ distinct copies $J'_1, J'_2,...,J'_{\frac{r}{2}}$ of the graph $J'$; in each copy we label the vertices accordingly by using a lower index. Identify $x_1, \ldots , x_{\frac{r}{2}}$; with a slight abuse of notation we denote the new vertex again by $x$ (see Figure~\ref{fig:rt_even}).
        
  	\begin{figure}[!htbp]
		\centering
		\includegraphics[width=7cm]{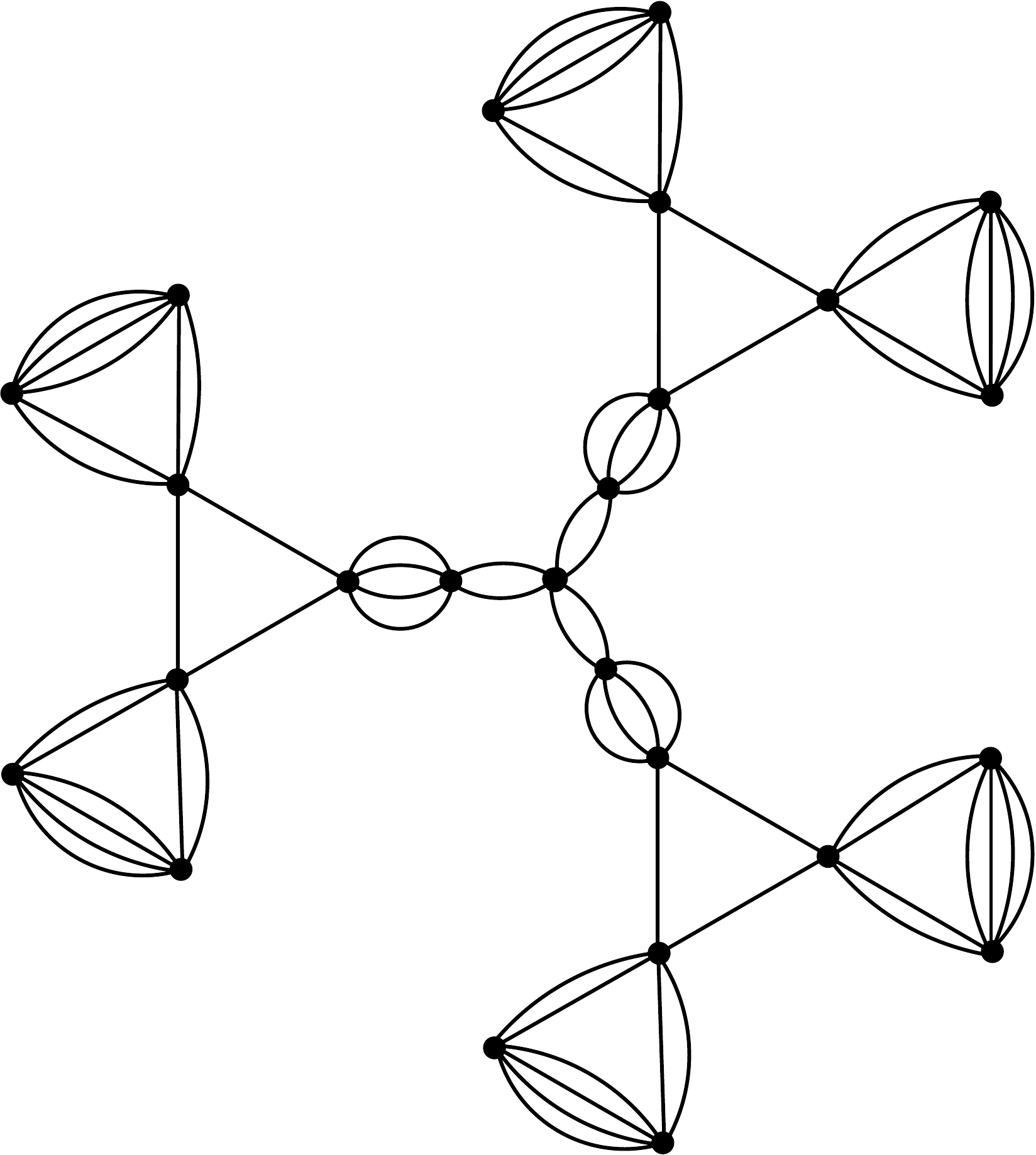}
		\caption{The graph $G_6$ introduced in the proof of the Theorem~\ref{theo:1_connected_examples} (in the case $r=6$ and $t$ even).}\label{fig:rt_even}
		\end{figure}
The graph $G_{r}$ is $r$-regular and, by the Petersen 2-factor Theorem, it admits a $t$-factor since $r,t$ are both even.
		
		Let ${\mathcal{O}}_r=\{O_1,...,O_{\frac{r}{2}}\}$ be the set of vertex-disjoint cycles where, for each $i \in \{1,...,\frac{r}{2}\}$, $O_i$ denotes the $3$-cycle of $J'_i$ induced by the edges $z_iv_i^1, z_iu_i^1$ and $v_i^1u_i^1$. Suppose, by contradiction, that there exists a $t$-factor $F$ of $G_r$ intersecting every cycle in ${\mathcal{O}}_r$. Let $i \in \{1,\ldots ,\frac{r}{2}\}$. Since $m_F(v_i^1,v_i^2)=m_F(v_i^1,v_i^3)$ holds and $t$ is even, we conclude that $|E(F) \cap \{z_iv_i^1,v_i^1u_i^1\}|$ is even. For the same reason, $|E(F) \cap \{z_iu_i^1,v_i^1u_i^1\}|$ is also even. Hence, either $\{z_iv_i^1,z_iu_i^1,v_i^1u_i^1\} \subseteq E(F)$ or $\{z_iv_i^1,z_iu_i^1,v_i^1u_i^1\} \cap E(F)=\emptyset$. Since $F$ intersects each cycle in ${\mathcal{O}}_r$, it is necessarily $\{z_iv_i^1,z_iu_i^1,v_i^1u_i^1\} \subseteq E(F)$. Thus, we have $m_F(x,y_i)=2$. Since $i$ was arbitrarily chosen, $F$ contains every edge incident to $x$, a contradiction.
		
		\underline{Case 2: $r\ge6$ is even, $t\ge3$ is odd.}\\
		Note that $t\le r-3$ in this case. Let $P$ be the \emph{paw graph} depicted in Figure~\ref{fig:paw}. In the rest of the proof we will refer to the labels in the figure for its vertices and edges.
		\begin{figure}[!htbp]
			\centering
			\includegraphics[width=5cm]{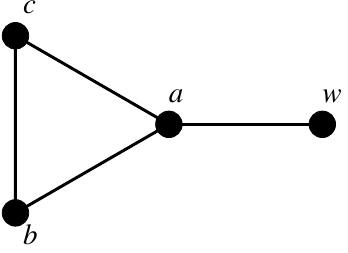}
			\caption{The paw graph introduced in the proof of the Theorem~\ref{theo:1_connected_examples} (in the case of even $r$ and odd $t$).}\label{fig:paw}
		\end{figure}
Let $P_1$ be the multigraph whose underlying simple graph is $P$ and such that $m(w,a)=2,\ m(a,b)=m(a,c)=\frac{r-2}{2}$ and $m(b,c)=\frac{r+2}{2}$.
Let $H_r$ be the $r$-regular graph obtained from $G_{r-2}$ (defined in Case 1) as follows. For each $v \in V(G_r)$ take a copy of $P_1$ and identify $v$ and the vertex corresponding to $w$.

Note that $H_r$ has two disjoint perfect matchings $M_1, M_2$, since $P_1$ has two disjoint perfect matchings. Moreover, $H_r-(M_1 \cup M_2)$ has a $(t-1)$-factor $F'$, since it is a graph of even regularity and $t$ is odd. Hence, $F'+M_1$ is $t$-factor of $H_r$.
  
		Let ${\mathcal{O}}_{r-2}$  be the set of vertex-disjoint cycles of the subgraph of $H_r$ isomorphic to $G_{r-2}$ that is defined in Case 1. By contradiction, suppose there exists a $t$-factor $F$ of $H_r$ intersecting every cycle of ${\mathcal{O}}_{r-2}$. By construction, $\partial_{H_r}(V(G_{r-2}))$ consists of $|V(G_{r-2})|$ pairs of parallel edges; each of them form a 2-edge-cut. Since $t$ is odd, $F$ contains exactly one edge of each pair by parity reasons. As a consequence, $E(F)$ induces a $(t-1)$-factor in $G_{r-2}$ intersecting all cycles in ${\mathcal{O}}_{r-2}$, a contradiction.

%		\underline{ $r \geq 5$ odd and $k \geq 3$ odd.}
		\underline{Case 3: $r \geq 5$ odd and $t \geq 2$.}\\
		Let $P_2$ be the multigraph whose underlying simple graph is $P$ and such that $m(w,a)=1,\ m(a,b)=m(a,c)=\frac{r-1}{2}$ and $m(b,c)=\frac{r+1}{2}$.
Let $H_r'$ be the $r$-regular graph obtained from $G_{r-1}$ (defined in Case 1) as follows. For each $v \in V(G_r)$ take a copy of $P_2$ and identify $v$ and the vertex corresponding to $w$.
  
       Note that $H_r'$ has a perfect matching $M$ since $P_2$ has one. Then $H_r'-M$ has an $l$-factor for all $l\in\{2,4,\dots,r-3\}$. We conclude that $H_r'$ as a $t$-factor.
		
		Let ${\mathcal{O}}_{r-1}$ be the set of vertex-disjoint $3$-cycles in the subgraph of $H_r'$ isomorphic to $G_{r-1}$ that is defined in Case 1. By contradiction, suppose there exists a $t$-factor $F$ of $H_r'$ intersecting every cycle of ${\mathcal{O}}_{r-1}$. By construction, $\partial_{H_r'}(V(G_{r-1}))$ consists of $|V(G_{r-1})|$ bridges. By parity reasons, if $t$ is odd, then $F$ contains all of them; if $t$ is even, then $F$ contains none of them. In the first case, $E(F)$ induces a $(t-1)$-factor in $G_{r-1}$ intersecting all cycles in ${\mathcal{O}}_{r-1}$, a contradiction. In the second case, $E(F)$ induces a $t$-factor in $G_{r-1}$ intersecting all cycles in ${\mathcal{O}}_{r-1}$, a contradiction again.
		
		\underline{Case 4: $t=1$.}\\
		Let $W$ be the graph shown in Figure~\ref{fig:W}. We will refer to the labels in the figure for its vertices and edges.
		\begin{figure}[!htbp]
			\centering
			\includegraphics[width=8cm]{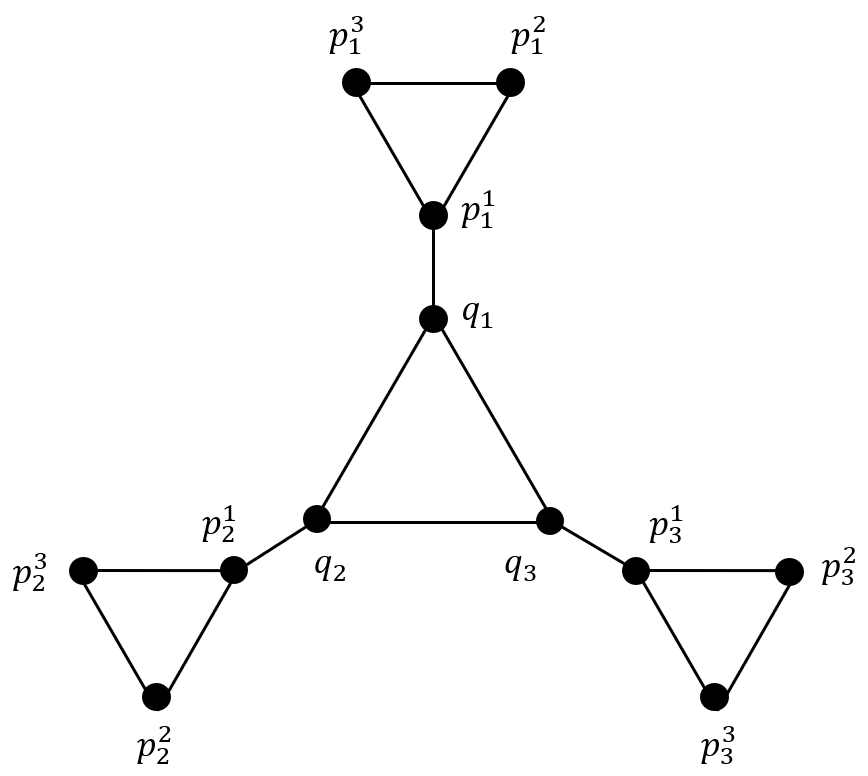}
			\caption{The graph $W$ introduced in the proof of the Theorem~\ref{theo:1_connected_examples} (in the case $t=1$).}\label{fig:W}
		\end{figure}
Let $W_r$ be the $r$-regular multigraph whose underlying simple graph is $W$ and such that $m(q_1,q_2)=m(q_1,q_3)=m(q_2,q_3)=1$ and $m(q_i,p_i^1)=r-2,\ m(p_i^2,p_i^3)=r-1,\ m(p_i^1,p_i^2)=m(p_i^1,p_i^3)=1$ for each $i \in \{1,2,3\}$. Let $\mathcal{O}$ be the set consisting of the $3$-cycle $O$ induced by the edges $q_1q_2,q_1q_3,q_2q_3$. Observe that $W$ has a unique $1$-factor, which does not contain an edge of $O$. As a consequence, $W_r$ has a $1$-factor but no $1$-factor of $W_r$ contains an edge of $O$.
\end{proof}

\section{Second necessary condition: \texorpdfstring{$\frac{t}{r} \geq \frac{1}{3}$}{t/r >= 1/3}}\label{sec:tr>13}

In this section, we obtain another necessary condition to have a positive answer to Question~\ref{ques:t-factor_destroying_odd_circuits}.
Indeed, the following theorem gives a negative answer to Question~\ref{ques:t-factor_destroying_odd_circuits} for all pairs $r,t$ such that $\frac{t}{r}<\frac{1}{3}$. It turns out  that the necessary condition $\frac{t}{r}\geq \frac{1}{3}$ remains valid even if we consider connectivity assumptions stronger than the $2$-connectivity proved in the previous section.

\begin{theo}
\label{theo:odd_circuits_t<r/3}
For every $r\geq 3$ there is an $r$-connected $r$-regular graph $G$ of even order  
and a set $\ca O$ of pairwise disjoint odd cycles of $G$ with the property that if $F$ is a $t$-factor of $G$ with $E(F) \cap E(O) \neq \emptyset$ for every $O \in \mathcal{O}$, then $t \geq \frac{r}{3}$.
\end{theo}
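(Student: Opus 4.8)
The plan is to reduce the entire statement to a single counting inequality coming from an edge decomposition into triangles. Recall that a triangle is the shortest odd cycle and that the members of $\ca O$ are required to be pairwise edge-disjoint, so each edge of $G$ lies in at most one of them. The heart of the argument is the following observation: if $\ca O$ is a family of pairwise edge-disjoint triangles whose union is all of $E(G)$ (a triangle decomposition of $G$), then every edge of $F$ lies in exactly one triangle of $\ca O$ and hence meets at most one member of $\ca O$. Consequently a $t$-factor $F$ intersecting every triangle of $\ca O$ must satisfy $|E(F)| \ge |\ca O|$. Since $|\ca O| = |E(G)|/3 = r|V(G)|/6$ and $|E(F)| = t|V(G)|/2$, this immediately yields $t \ge r/3$, which is the desired conclusion. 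Thus the whole problem is reduced to \emph{constructing}, for every $r$, an $r$-regular, $r$-connected graph of even order admitting a triangle decomposition (or a good enough near-decomposition).

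For even $r$ I would realise such graphs by balanced complete multipartite graphs. The graph $K_{n,\dots,n}$ with $k$ parts is $(k-1)n$-regular, has connectivity equal to its minimum degree (so it is genuinely $r$-connected with $r=(k-1)n$), and decomposes into triangles whenever the standard divisibility conditions are met; for instance $K_{n,n,n}$ splits into $n^{2}$ triangles via a Latin square, and $K_{2,2,2,2}$-type graphs cover the remaining residues. Choosing $k,n$ with $(k-1)n=r$ and $kn$ even produces an example of even order for every even $r$, and the counting lemma then gives $t\ge r/3$. One does not even need an exact decomposition here: covering all but $o(|V(G)|)$ edges by edge-disjoint triangles already forces $t\ge r/3$ once $|V(G)|$ is large, since $t$ is an integer and $r/3-\varepsilon$ rounds up to $\lceil r/3\rceil$.

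For odd $r$ the degree parity is the obstruction: at each vertex an edge-disjoint family of cycles covers an even number of incident edges, so at least a perfect matching $M$ must stay uncovered, and (as triangles maximise the number of cycles per covered edge) no family of odd cycles can beat covering an $(r-1)$-regular spanning subgraph. I would therefore take an even case $G'$ for degree $r-1$, add a perfect matching $M$ to reach $r$-regularity (choosing $M$ so that $G=G'+M$ is $r$-connected), and keep $\ca O$ as the triangle decomposition of $G'$. The counting lemma then gives $|E(F)|\ge|\ca O|=(r-1)|V(G)|/6$, hence $t\ge (r-1)/3$, and by integrality this already equals $\lceil r/3\rceil$ for all odd $r$ except $r\equiv 1\pmod 6$.

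The main obstacle is exactly the residue $r\equiv 1\pmod 6$, where $(r-1)/3$ is an even integer and the counting bound falls one short of $\lceil r/3\rceil$. Equality $t=(r-1)/3$ can occur only if $F$ avoids $M$ completely and picks exactly one edge from each triangle, i.e.\ $F$ is an $((r-1)/3)$-regular triangle-transversal of $G'$; for a generic decomposition such a transversal genuinely exists (e.g.\ $K_{8}$ with $\ca O$ a decomposition of $K_8$ minus a matching admits a hitting $2$-factor), so the plain $G'+M$ example is \emph{not} good enough and the theorem would be false for it. The crux is therefore to engineer the example so that no $((r-1)/3)$-regular transversal avoiding $M$ meets all triangles — equivalently, to force every hitting $t$-factor to use at least one edge of $M$, which by the counting identity pushes $t$ strictly above $(r-1)/3$ and hence to $\lceil r/3\rceil$. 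I expect this to require a dedicated local rigidity/parity gadget in place of the free complete-multipartite construction, together with a careful verification that $r$-connectivity and even order are preserved; securing this final unit for $r\equiv 1\pmod 6$ is where the real difficulty lies.
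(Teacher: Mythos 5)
Your counting lemma is correct, and on the residues where it applies your construction scheme would work, but the proof is incomplete exactly where you say it is: the case $r\equiv 1\pmod 6$ is left unresolved, and this is not a loose end that more care with the same method can fix --- it is a case your framework provably cannot handle. Any argument whose only input is ``each edge of $F$ lies in at most one member of $\ca O$, hence $|E(F)|\ge|\ca O|$'' is capped by the maximum possible size of $\ca O$. For odd $r$, an edge-disjoint family of cycles covers an even number of edges at each vertex, so it covers at most $(r-1)|V(G)|/2$ edges and thus $|\ca O|\le (r-1)|V(G)|/6$; the transversal count then never yields more than $t\ge (r-1)/3$, which for $r\equiv 1\pmod 6$ is an integer strictly below $r/3$. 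So no choice of $G'$, of the matching $M$, or of the decomposition closes the gap within this framework; the ``rigidity/parity gadget'' you allude to is precisely the missing idea, not a routine verification. (The even case is also only sketched --- the divisibility, even-order and connectivity bookkeeping for multipartite triangle decompositions across all residues is plausible but unverified --- though that part looks completable.)

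The paper avoids this dead end by not using a decomposition at all. It takes three copies of $K_{r-2,r}$ with parts $A_i$ of size $r$ and $B_i$ of size $r-2$, and joins the $A$-sides by $r$ pairwise disjoint triangles $a_j^1a_j^2a_j^3$; the set $\ca O$ consists of just these $r$ triangles. The counting is a budget argument rather than a transversal bound: every vertex of $B_1\cup B_2\cup B_3$ forces $t$ edges of $F$, none of which lies on a triangle, so at most $|E(F)|-3(r-2)t=t(3r-3)-3(r-2)t=3t$ edges of $F$ remain available to hit the $r$ triangles, giving $3t\ge r$ directly. This works uniformly for every $r\ge 3$ with no parity cases, and $r$-connectivity and even order ($|V(G)|=6r-6$) are immediate. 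The lesson, if you want to salvage your approach, is that the extremal examples need not maximize $|\ca O|$: a few triangles attached to a rigid bulk (here, bipartite blocks that soak up almost all of $F$'s edges) constrain $F$ far more than a full triangle decomposition does.
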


\begin{proof}
For $i \in \{1,2,3\}$ let $G_i$ be a graph isomorphic to $K_{r-2,r}$, where the two partitions are given by $A_i=\{a_1^i, \ldots, a_r^i\}$ and $B_i=\{b_1^i, \ldots, b_{r-2}^i\}$.

Construct a new graph $G$ from $G_1,G_2,G_3$ by adding the edges $a_j^1 a_j^2$, $a_j^1 a_j^3$ and $a_j^2 a_j^3$ for every $j \in \{1, \ldots, r\}$. Let $\ca O$ be the set of pairwise disjoint triangles of $G$ that are induced by the added edges, i.e.\ $\ca O=\{a_j^1 a_j^2 a_j^3 a_j^1 \colon j \in \{1, \ldots, r\}\}$. The graph $G$ and the set of cycles $\ca O$ are depicted in Figure~\ref{fig:t_small} in the case when $r=4$.
\begin{figure}[!htbp]
\centering
\includegraphics[scale=0.5]{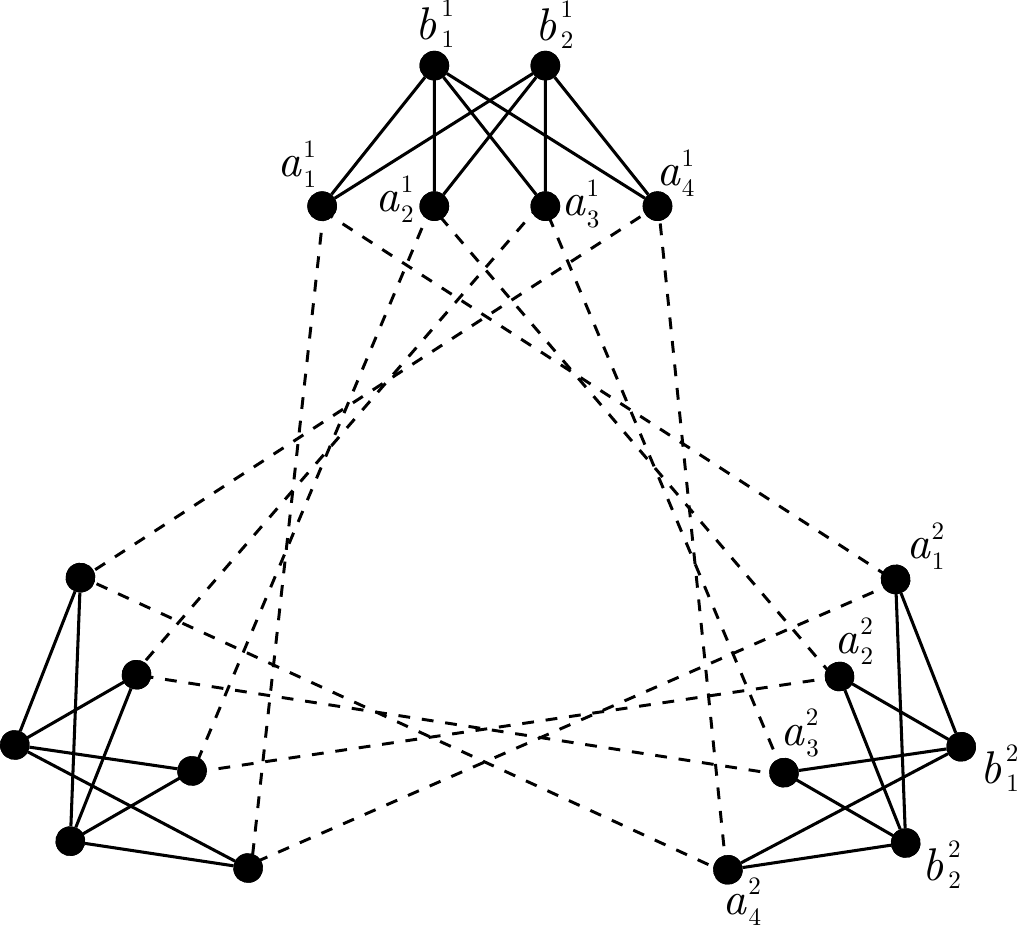}
\caption{The graph $G$ constructed in the proof of Theorem~\ref{theo:odd_circuits_t<r/3} in the case $r=4$. The set $\ca O$ consists of the triangles whose edges are drawn with dashed lines.}
\label{fig:t_small}
\end{figure}
We claim that $G$ and $\ca O$ have the desired properties. By construction, $G$ is an $r$-regular graph of even order. Let $X \subset V(G)$ be a vertex-cut of $G$. We will show that $\lvert X \rvert \geq r$, hence $G$ is $r$-connected. If the subgraph of $G$ induced by $V(G_i) \setminus X$ is connected for every $i \in \{1,2,3\}$, then $X$ contains at least one vertex of every triangle of $\ca O$. As a consequence $|X|\geq r$. Thus, we can assume that w.l.o.g.\ the subgraph of $G$ induced by $V(G_1) \setminus X$ is not connected. Hence, $A_1 \subseteq X$ or $B_1\subseteq X$. In both cases, $|X|\geq r$ since $G-B_1$ does not have a cut-vertex. Therefore, $G$ is $r$-connected.

Finally, let $F$ be a $t$-factor of $G$ with $E(F) \cap E(O) \neq \emptyset$ for every $O \in \mathcal{O}$.
By the construction of $G$, we have $|E(F)\cap \partial_G(B_1 \cup B_2 \cup B_3)|=3(r-2)t$. Furthermore, $E(F)$ contains at least one edge of every triangle in $\ca O$. As a consequence, $3(r-2)t+r \leq |E(F)|=t(3r-3)$, which can be transformed to $\frac{t}{r} \geq \frac{1}3$ by a short calculation.
\end{proof}

We remark that every $r$-connected $r$-regular graph of even order admits a $t$-factor for all $t \in \{1,\ldots, r\}$ (see \cite{BSW_reg_factors_of_reg_graph}), which in particular includes the graphs constructed in the above proof.

The next two sections are devoted to the hardest task of proving some sufficient conditions to have a positive answer to Question~\ref{ques:t-factor_destroying_odd_circuits}.

\section{A first sufficient condition: \texorpdfstring{$\frac{t}{r}=\frac{1}{3}$}{t/r=1/3}}\label{sec:13}

In this section, we prove Theorem~\ref{theo:generalisation_KMZ_odd_circuits}. 
The hardest part of its proof lies in the basic case $r=3$ and $t=1$ proved in \cite{KARDOS20231}. Starting from that result, we generalize to arbitrary values $t,r$ such that $\frac{t}{r}=\frac{1}{3}.$

For a tree $T$, the set of leaves of $T$ is denoted by $Leaf(T)$. An edge of $T$ which is incident with a leaf is called a \emph{pendant} edge.  Moreover, a pendant edge is \emph{lonely} if it is not adjacent to another pendant edge.
We first define a series of sets of trees $\mathcal{T}^1,\mathcal{T}^2, \ldots$ inductively as follows:
\begin{itemize}
\item $\mathcal{T}^1=\{K_{1,3}\}$
\item for every $t>1$, $\mathcal{T}^t$ consists of all trees that can be obtained as follows:
\begin{enumerate}
\item start with a tree $T \in \mathcal{T}^{t-1}$
\item add two copies $H_1, H_2$ of $K_{1,3}$
\item identify $l,l_1$ and $l_2$ to a new vertex, where $l \in Leaf(T)$, $l_1 \in Leaf(H_1)$ and $l_2 \in Leaf(H_2)$.
\end{enumerate}
\end{itemize}

The only graphs in $\ca T^2$ and $\ca T^3$ are depicted in Figure~\ref{fig:three_in_S_2}. 
\begin{figure}[!htbp]
\centering
\includegraphics[scale=0.5]{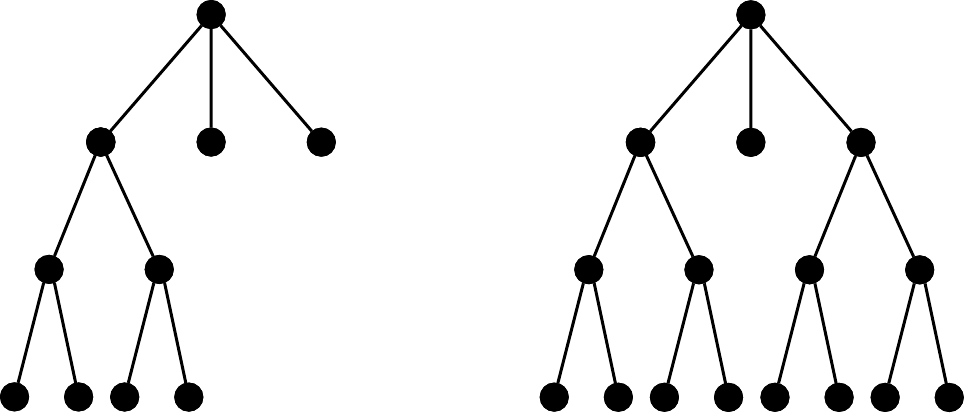}
\caption{The only element of $\ca T^2$ (left) and $\ca T^3$ (right).}
\label{fig:three_in_S_2}
\end{figure}
Note that for every positive integer $t$, every tree of $\mathcal{T}^t$ has exactly $3t$ leaves.
Furthermore, we will use the following two lemmas.

\begin{lem}\label{obs:adjacent_pendant_edges}
For every integer $t\geq 1$, the set $\mathcal{T}^t$ contains a tree with at most one lonely pendant edge.
\end{lem}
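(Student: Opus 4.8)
The plan is to reformulate the notion of a lonely pendant edge in terms of the vertices that carry leaves, and then to maintain a suitable invariant through the inductive construction of $\mathcal{T}^t$. Call a non-leaf vertex $u$ a \emph{support vertex} if it has a leaf neighbour, and observe that a pendant edge $uv$ (with $v\in Leaf(T)$) is lonely precisely when $u$ has no other leaf neighbour. Hence the number of lonely pendant edges of $T$ equals the number of support vertices with exactly one leaf neighbour; call such a vertex a \emph{lonely support vertex}. In these terms the statement to be proved becomes: $\mathcal{T}^t$ contains a tree with at most one lonely support vertex. I would prove this by induction on $t$.

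First I would record the effect of the defining operation on support vertices. Every internal vertex of a tree in $\mathcal{T}^t$ has degree exactly $3$: this holds for $K_{1,3}$ and is preserved by the construction, since the identified vertex $w$ and the two new centres all have degree $3$, while the degree of the neighbour $p$ of the attaching leaf $l$ is unchanged (one of its neighbours is merely promoted from a leaf to the internal vertex $w$). Consequently, when the operation is performed at a leaf $l$ whose neighbour is the support vertex $p$, the two new centres $c_1,c_2$ become support vertices each with exactly two leaves (hence never lonely), the vertex $w$ becomes internal, and $p$ loses exactly one leaf while keeping degree $3$, so $p$ never degenerates into a leaf. Thus attaching at $l$ decreases the leaf-count of $p$ by one and leaves the leaf-counts of all other old support vertices untouched.

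With this, the induction runs as follows. For $t=1$ the centre of $K_{1,3}$ has three leaves, so there is no lonely support vertex. For the step, suppose $T\in\mathcal{T}^{t-1}$ has at most one lonely support vertex. If $T$ has exactly one lonely support vertex $p$, I attach the two copies of $K_{1,3}$ at the unique leaf of $p$; then $p$ drops to zero leaves and ceases to be a support vertex, while only the two new non-lonely support vertices appear, yielding a tree in $\mathcal{T}^t$ with no lonely support vertex. If instead $T$ has no lonely support vertex, every support vertex has at least two leaves; if some support vertex $q$ has at least three leaves I attach at a leaf of $q$, so $q$ retains at least two leaves and no lonely vertex is created, and otherwise every support vertex has exactly two leaves and attaching at an arbitrary leaf creates exactly one lonely support vertex. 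In every case the resulting tree lies in $\mathcal{T}^t$ and has at most one lonely support vertex, closing the induction.

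The only genuine obstacle is the forced subcase in which every support vertex has exactly two leaves: there one cannot avoid creating a lonely pendant edge, which is why the invariant must be phrased as \emph{at most one} rather than \emph{none}. What makes the induction close is precisely that a single lonely support vertex can always be eliminated at the next step by attaching at its leaf, so the count never accumulates beyond one; the remaining verifications (the degree-$3$ structure and the leaf-count bookkeeping) are routine.
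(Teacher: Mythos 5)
Your proof is correct and follows essentially the same strategy as the paper's: induction on $t$ maintaining the invariant of at most one lonely pendant edge, where a tree with exactly one lonely pendant edge is repaired by attaching the two copies of $K_{1,3}$ at that lonely leaf, and a tree with none yields at most one new lonely pendant edge. Your explicit handling of the $K_{1,3}$ case (a support vertex with three leaves) is subsumed in the paper by declaring the base cases $t\in\{1,2\}$ trivially true and starting the induction at $t\geq 3$; otherwise the two arguments coincide.
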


\begin{proof}
We prove the statement by induction on $t$. For $t \in \{1,2\}$ the statement is trivially true. Next, let $t \geq 3$ and let $T \in \mathcal{T}^{t-1}$ be a tree with at most one lonely pendant edge. If $T$ has no lonely pendant edge, then every tree in $\mathcal{T}^{t}$ obtained from $T$ has exactly one lonely pendant edge. If $T'$ has one lonely pendant edge $vl$ where $l \in Leaf(T)$, then the tree in $\mathcal{T}^{t}$ obtained from $T$ by applying step 3 (see above definition) on $l$ has no lonely pendant edge.
\end{proof}

\begin{lem}\label{obs:number_of_leafs}
Let $T \in \mathcal{T}^t$ and let $M \subset E(T)$ be a matching such that every vertex of $V(T) \setminus Leaf(T)$ is matched by $M$. Then, exactly $t$ leaves of $T$ are matched by $M$.
\end{lem}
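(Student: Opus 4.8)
The plan is to prove the statement by induction on $t$, following the recursive definition of $\mathcal{T}^t$. For the base case $t=1$ the tree is $K_{1,3}$, whose only non-leaf vertex is the centre; since $M$ must saturate it and all of its neighbours are leaves, $M$ contains exactly one pendant edge and hence matches exactly one leaf, as required.

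For the inductive step I would fix $T_{\mathrm{new}} \in \mathcal{T}^t$ with $t \geq 2$ and write it as the tree obtained from some $T \in \mathcal{T}^{t-1}$ by attaching two copies $H_1, H_2$ of $K_{1,3}$ and identifying a leaf $l \in Leaf(T)$ with a leaf of each $H_i$ into a single new vertex $w$. Denote by $h_1, h_2$ the centres of $H_1, H_2$ and by $p$ the unique neighbour of $l$ in $T$. The structural facts I would record first are: the non-leaf vertices of $T_{\mathrm{new}}$ are exactly $w, h_1, h_2$ together with the non-leaf vertices of $T$; the leaves of $T_{\mathrm{new}}$ are the leaves of $T$ other than $l$ together with the four remaining private leaves of $H_1$ and $H_2$; and $w$ is adjacent precisely to $p, h_1, h_2$, so the only edge joining the new gadget to $T$ is $wp$, coming from the old pendant edge $lp$.

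Since $M$ saturates $w$, I would then split into two cases according to the partner of $w$. If $wp \in M$, each $h_i$ must be matched to one of its two private leaves, contributing two matched leaves in the gadget; moreover the edges of $M$ lying in $T$ (reading $wp$ as the pendant edge $lp$) form a matching $M_T$ of $T$ that saturates every non-leaf vertex of $T$ and matches $l$, so by induction $M_T$ matches exactly $t-1$ leaves of $T$, one of which is $l$. If instead $w$ is matched to some $h_i$, then $p$ is forced to be matched inside $T$, the other centre $h_{3-i}$ is matched to a private leaf (one matched leaf in the gadget), and the two leaves of $H_i$ stay unmatched; now the edges of $M$ inside $T$ form a matching $M_T$ saturating all non-leaf vertices of $T$ and leaving $l$ unmatched, so induction again yields exactly $t-1$ matched leaves of $T$, none equal to $l$. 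Counting matched leaves of $T_{\mathrm{new}}$ gives $(t-1-1)+2 = t$ in the first case and $(t-1)+1 = t$ in the second, closing the induction.

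The step I expect to require the most care is verifying in each case that $M_T := M \cap E(T)$ is genuinely a matching of $T$ saturating all of its non-leaf vertices. This hinges on the observation that $p$ is the only vertex of $T$ adjacent to the new gadget: every non-leaf vertex of $T$ other than $p$ has all of its neighbours inside $T$, while $p$ itself is saturated inside $T$ exactly when it is not matched to $w$. The two cases are thus precisely calibrated to control whether the pendant edge at $l$ belongs to $M_T$. The other delicate point, where an off-by-one error could easily creep in, is the bookkeeping of the leaf set, remembering that $l$ ceases to be a leaf of $T_{\mathrm{new}}$ once it is identified into the internal vertex $w$.
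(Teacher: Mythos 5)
Your proof is correct and takes essentially the same route as the paper's: induction on $t$ along the recursive construction of $\mathcal{T}^t$, with a case split on whether the old pendant edge at the identified vertex (your $wp$, the paper's $e$) belongs to $M$, giving the counts $(t-1)-1+2=t$ and $(t-1)+1=t$. The only difference is that you spell out in more detail why the restriction $M_T$ is a matching of $T$ saturating its non-leaf vertices, which the paper leaves implicit.
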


\begin{proof}
We prove the statement by induction on $t$. For $t=1$ the statement is trivially true. Next, assume the statement is true for every $t'<t$. Let $T' \in \mathcal{T}^{t-1}$ and let $T \in \mathcal{T}^{t}$ be obtained from $T'$ by identifying the leaf $l$ of $T'$ and the leaves $l_1,l_2$ one in each of two copies of $K_{1,3}.$  Let $e$ be the unique edge of $T'$ incident to $l$. With a slight abuse of notation, we denote by $e$ also the corresponding edge in $T$. Let $e_1,\ldots,e_4$ be the pendant edges of $T$ corresponding to the edges in the two added copies of $K_{1,3}$. If $e \in M$, then $M$ contains exactly two edges of $\{e_1,\ldots,e_4\}$. Hence, $M$ contains exactly $(t-1)-1+2=t$ pendant edges of $T$ by induction. If $e \notin M$, then $M$ contains exactly one edge of $\{e_1,\ldots,e_4\}$. Thus, $M$ contains exactly $(t-1)+1=t$ pendant edges, again by induction.
\end{proof}

For a vertex $v$ of a graph $G$, and a graph $H$ disjoint from $G$, a new graph $G'$ can be obtained from $G$ as follows: consider the disjoint union of $G$ and $H$; for every edge $e \in E(G)$ incident to $v$, replace the end-vertex $v$ of $e$ by a vertex of $H$; delete $v$.
We say $G'$ is obtained from $G$ by \emph{replacing} $v$ with $H$. Note that there are many different graphs that can be obtained from $G$ by replacing $v$ with $H$; all of them have vertex-set $(V(G)\setminus{v}) \cup V(H)$ and edge-set  $E(G) \cup E(H)$.

We will now prove Theorem~\ref{theo:generalisation_KMZ_odd_circuits}, which we repeat here for the reader's convenience:

\thmgeneralisationKMZoddcircuits*

\begin{proof}
Let $T \in \mathcal{T}^t$ be a tree with at most one lonely pendant edge, which exists by Lemma~\ref{obs:adjacent_pendant_edges}. Recall from the proof of that lemma that such a tree has no lonely pendant edges for $t$ even and exactly one lonely pendant edge for $t>1$ odd. First, we transform $G$ into a new graph $G'$ as follows. For every $v \in V(G)$ replace $v$ by a copy $T_v$ of $T-Leaf(T)$ such  that (1) every vertex of $T_v$ is of degree 3 and (2) if $e,f \in \partial_G(v)$ belong to the same cycle of $\mathcal{O}$, then $e,f$ remain adjacent in the resulting graph $G'$. An example is given in Figure~\ref{fig:Replacement_S_2}.
\begin{figure}[!htbp]
\centering
\includegraphics[scale=0.5]{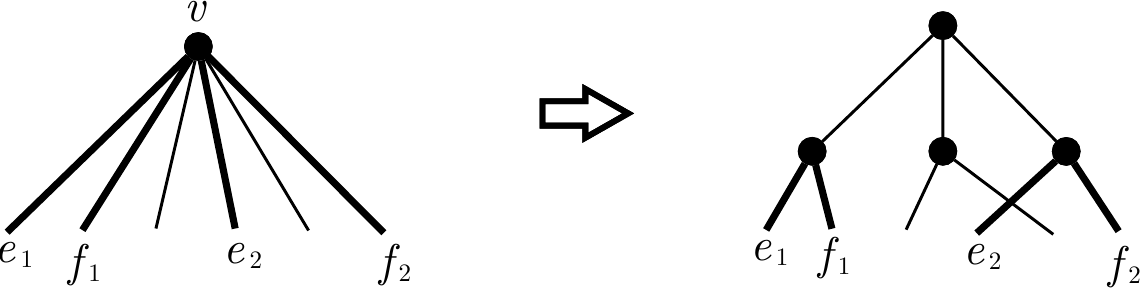}
\caption{The replacement of a vertex $v$ in the proof of Theorem~\ref{theo:generalisation_KMZ_odd_circuits} in the case that $t=2$ and the edges $e_1, f_1$ as well as $e_2,f_2$ belong to the same cycle.}
\label{fig:Replacement_S_2}
\end{figure}

Note that (1) is possible since $T$ has exactly $3t$ leaves; (2) is possible since $T$ has at most one lonely pendant edge.
We obtain a cubic graph $G'$ with vertex-set $\bigcup_{v \in V(G)}V(T_v)$ and edge-set $E(G) \cup \bigcup_{v \in V(G)}E(T_v)$. Furthermore, for every $v \in V(G)$, the graph $G'-V(T_v)$ is connected, since $G$ is $2$-connected. As a consequence, $G'$ is 2-connected. For every $O \in \mathcal{O}$ the subgraph of $G'$ induced by $E(O)$ is an odd cycle in $G'$, which will be denoted by $O'$. Let $\mathcal{O}'=\{O'\colon O \in \mathcal{O}\}$. By Theorem~\ref{theo:KMZ_odd_circuits}, $G'$ has a perfect matching $M$ such that $e \in M$ and $M \cap E(O') \neq \emptyset$ for every $O' \in \mathcal{O}'$. Let $F$ be the subgraph of $G$ induced by the edge-set $M \cap E(G)$. By Lemma~\ref{obs:number_of_leafs}, $|\partial_{G'}(V(T_v)) \cap M|=t$ for every $v \in V(G)$ and hence, $F$ is a $t$-factor of $G$. Furthermore, $E(O') \cap M$ is a non-empty matching in $G'$ for every $O' \in \mathcal{O}'$ and therefore, $E(O) \cap E(F)$ is a non-empty matching in $G$ for every $O \in \mathcal{O}$ by the construction of $G'$. Thus, $F$ has the desired properties.
\end{proof}

Note that if $r$ and $t$ have the same parity, then for every $r$-regular graph $G$ and every $t$-factor $F$ of $G$ the graph $G-E(F)$ can be decomposed into $2$-factors. Thus, for every $t'\in \{t, t+2, \ldots r\}$ the graph $G$ has a $t'$-factor that contains $F$. As a consequence, Theorem~\ref{theo:generalisation_KMZ_odd_circuits} implies the following corollary.

\begin{cor}
\label{cor:3k-reg_case_Odd_circuits}
Let $t\geq 1$ be an integer and let $G$ be a 2-connected $3t$-regular graph. Let $\mathcal{O}$ be a set of pairwise edge-disjoint odd cycles of $G$ and let $e \in E(G)$. Then, for every $l \in \{t, t+2,\ldots,3t\}$ there exists a $l$-factor $F$ of $G$ such that $e \in E(F)$ and $E(F) \cap E(O) \neq \emptyset$ for every $O \in \mathcal{O}$.
\end{cor}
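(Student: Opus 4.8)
The plan is to derive Corollary~\ref{cor:3k-reg_case_Odd_circuits} from Theorem~\ref{theo:generalisation_KMZ_odd_circuits} together with the general decomposition remark preceding its statement. First I would invoke Theorem~\ref{theo:generalisation_KMZ_odd_circuits} to obtain a $t$-factor $F$ of the $3t$-regular graph $G$ with $e \in E(F)$ and $E(F) \cap E(O)$ a non-empty matching of $G$ for every $O \in \mathcal{O}$; in particular $E(F) \cap E(O) \neq \emptyset$ for every $O \in \mathcal{O}$. The goal is then to enlarge $F$ into an $l$-factor, for each $l \in \{t, t+2, \ldots, 3t\}$, while preserving both the condition $e \in E(F)$ and the intersection property with $\mathcal{O}$. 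The key observation is that these two properties are \emph{monotone under adding edges}: if $F \subseteq F'$ and $F$ already contains $e$ and meets every cycle of $\mathcal{O}$, then so does $F'$. So the only thing I need to arrange is the existence of $l$-factors containing $F$.

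The mechanism to produce these larger factors is exactly the parity remark stated just before the corollary. I would note that $G$ is $3t$-regular and $F$ is a $t$-factor, so $r = 3t$ and $t$ have the same parity (both are congruent modulo $2$ to $t$). Hence the remark applies: $G - E(F)$ is a $(3t - t) = 2t$-regular graph, which has even degree and therefore decomposes into $t$ edge-disjoint $2$-factors $C_1, C_2, \ldots, C_t$ by a standard Eulerian/Petersen argument. For each $l \in \{t, t+2, \ldots, 3t\}$, write $l = t + 2s$ with $s \in \{0, 1, \ldots, t\}$, and set $F_l = F \cup C_1 \cup \cdots \cup C_s$. Since each $C_i$ is a $2$-factor, adding $s$ of them raises the degree of every vertex by exactly $2s$, so $F_l$ is an $l$-factor of $G$. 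Because $F \subseteq F_l$, monotonicity gives $e \in E(F_l)$ and $E(F_l) \cap E(O) \neq \emptyset$ for every $O \in \mathcal{O}$, which is precisely the claim.

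I would present this cleanly by first stating the decomposition of $G - E(F)$ into $2$-factors (justified by the even regularity of $G - E(F)$), then defining $F_l$ for the required values of $l$, and finally checking the two desired properties via the inclusion $F \subseteq F_l$. I do not anticipate a genuine obstacle here, since all the heavy lifting is done by Theorem~\ref{theo:generalisation_KMZ_odd_circuits}; the corollary is essentially a bookkeeping argument. The one point that deserves a careful sentence rather than a hand-wave is the $2$-factor decomposition of $G - E(F)$: I would make explicit that $G - E(F)$ is a regular graph of even degree $2t$ on each connected component and therefore admits an Eulerian orientation, whose in-arcs and out-arcs split it into $t$ spanning $2$-regular subgraphs. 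With that justification recorded, the union construction and the monotonicity check complete the proof.
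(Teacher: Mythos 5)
Your proposal is correct and matches the paper's own argument exactly: the paper derives this corollary from Theorem~\ref{theo:generalisation_KMZ_odd_circuits} via the same remark that, since $3t$ and $t$ have equal parity, the $2t$-regular graph $G-E(F)$ decomposes into $2$-factors (Petersen), whose unions with $F$ give the required $l$-factors by monotonicity. Your only loose phrase is the claim that the in-arcs and out-arcs of an Eulerian orientation themselves split the graph into $2$-factors --- the standard route is Eulerian orientation, then the associated $t$-regular bipartite graph, then K\H{o}nig's decomposition into perfect matchings --- but since this is just Petersen's $2$-factor theorem, the proof stands as written.
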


\section{Another sufficient condition: \texorpdfstring{$\frac{t}{r}=\frac{1}{2}$}{t/r=1/2}, \texorpdfstring{$t$}{t} even.}\label{sec:12}

In light of all the previous results, the smallest open instance for Question~\ref{ques:t-factor_destroying_odd_circuits} remains $r=4$ and $t=2$: in this section we give a positive answer for this and other cases by proving Theorem~\ref{thm:4k-reg_case_Odd_circuits}.

Let $G$ be a graph with an \emph{orientation} $D$, i.e.\ a choice of direction for each edge. A cycle $C$ of $G$ is an \emph{oriented} cycle (with respect to $D$), if for every $v \in V(C)$ exactly one edge of $\partial_G(v)\cap E(C)$ is directed towards $v$. Moreover, for a vertex $v \in V(G)$, the number of edges directed towards $v$ is the \emph{indegree} of $v$ with respect to $D$. %\dm{definition of indegree is moved to here. I did not move the definition of orientation. If you want to add it back this might be the right place but I don't think we need that definition. What do you think?} \Jarne{We could put a small informal sentence, for example "\emph{orientation} $D$, i.e. a choice of direction for each edge."} \gm{I agree}

We first prove the following two lemmas.

\begin{lem}
\label{lem:existence_trees}
For every integer $t \geq 2$, there exists a tree $T$ such that $T$ has $2t$ leaves, every other vertex is of degree 3 and $T$ has no lonely pendant edge.
\end{lem}

\begin{proof}
 We argue by induction on the number of leaves. For $t=2$, the tree obtained by attaching two pendant edges to both vertices of $K_2$ has the desired properties. Next, set $t\geq2$ and let $T'$ be a tree that has the properties stated in the lemma. Let $u$ and $v$ be two leaves of $T'$ at mutual distance $2$. Consider four new vertices $u_1,u_2$ and $v_1,v_2$. Construct a new tree $T''$ having vertex-set $V(T') \cup \{u_1,u_2,v_1,v_2\}$ and edge-set $E(T')\cup\{uu_1,uu_2,vv_1,vv_2\}.$ By construction, $T''$ has $2(t+1)$ leaves, every other vertex is of degree 3 and $T''$ has no lonely pendant edge.
\end{proof}

\begin{lem}
\label{lem:orientation_4k-reg_case}
Let $t\geq2$ be an even integer, let $G$ be a $2$-connected $2t$-regular graph and let $\mathcal{O}$ be a set of pairwise edge-disjoint odd cycles of $G$. Then, there exists an orientation of $G$ such that
\begin{itemize}
\item[$(i)$] every vertex has an even indegree,
\item[$(ii)$] no cycle of $\ca O$ is an oriented cycle.
\end{itemize}
\end{lem}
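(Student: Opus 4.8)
The plan is to treat the two requirements separately and then reconcile them. For $(i)$, note that since $G$ is $2$-connected and $2t$-regular it is Eulerian, hence it admits an Eulerian orientation $D_0$ in which every vertex has indegree equal to its outdegree, namely $t$; as $t$ is even by hypothesis, $D_0$ already satisfies $(i)$. Thus the whole difficulty is to arrange $(ii)$ without destroying $(i)$. The key fact linking the two is that reversing all edges of an \emph{even} subgraph $S$ (an element of the cycle space of $G$ over $\mathbb{F}_2$) changes the indegree of each vertex by an even number, because the number of reversed edges at any vertex is even; hence such a reversal preserves property $(i)$. Consequently it suffices to start from $D_0$ and reverse a suitable even subgraph.

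Next I would reformulate $(ii)$ locally: a cycle $C$ is \emph{not} an oriented cycle precisely when it has a \emph{source}, i.e.\ a vertex $v\in V(C)$ both of whose $C$-edges point away from $v$ (equivalently a sink, since on a cycle the numbers of sources and of sinks coincide). So the goal becomes to reverse an even subgraph so that every $C\in\mathcal{O}$ acquires a source. For a single directed cycle $C$ this is easy. Let $v\in V(C)$ and let $e$ be the $C$-edge entering $v$ and $f$ the one leaving $v$; since $\deg_G(v)=2t\geq 4$, the vertex $v$ has an edge $g\notin E(C)$, and since $G$ is $2$-connected the graph $G-v$ is connected and contains a path $P$ joining the far endpoints of $e$ and $g$. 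Then $Z:=\{e,g\}\cup E(P)$ is a cycle, hence an even subgraph, and it meets $E(C)$ in a proper non-empty set: it contains $e$ but, since at $v$ it uses only $e$ and $g$ while $P$ avoids $v$, it does not contain $f$. Reversing $Z$ thus flips $e$ but not $f$, so both $C$-edges at $v$ now leave $v$ and $v$ becomes a source of $C$, while $(i)$ is preserved because $Z$ is even.

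To finish I would combine these local corrections by an extremal argument: among all orientations of $G$ satisfying $(i)$, pick one, say $D$, minimising the number of cycles of $\mathcal{O}$ that are oriented cycles, and suppose for contradiction that this number is positive, with $C\in\mathcal{O}$ oriented. Applying the construction above produces an even subgraph $Z$ whose reversal creates a source on $C$. If $Z$ can moreover be chosen edge-disjoint from every other cycle of $\mathcal{O}$ — which the freedom in the choice of $g$ and of the path $P$ allows whenever $v$ has a non-$\mathcal{O}$ edge and the endpoints of $e,g$ are joined by a path inside $G-E(\mathcal{O})$ — then reversing $Z$ destroys the oriented cycle $C$ and alters no other cycle of $\mathcal{O}$, strictly decreasing the count and contradicting minimality.

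The step I expect to be the main obstacle is exactly this combination: the cycles of $\mathcal{O}$ are edge-disjoint but may share vertices, so an even subgraph used to repair one cycle can simultaneously turn a previously non-oriented cycle into an oriented one, and in the degenerate case in which the vertices of $C$ lie on many cycles of $\mathcal{O}$ the graph $G-E(\mathcal{O})$ may isolate $V(C)$, so no \emph{private} correction as above exists. The cleanest way I see to control this globally is to phrase everything in the cycle space: writing an arbitrary orientation satisfying $(i)$ as $D_0$ with an even subgraph $x$ reversed, the cycle $C$ is an oriented cycle if and only if the restriction $x|_{E(C)}$ is one of the two constant patterns, and since the sets $E(C)$ are pairwise disjoint these become independent forbidden patterns on disjoint blocks of coordinates. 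Because $G$ has minimum degree at least $4$, the cycle $Z$ constructed above shows that the projection of the cycle space onto each block $E(C)$ strictly contains the two constant vectors, so each block admits an admissible value in isolation; the genuinely delicate point, where the $2$-connectivity and the edge-disjointness of $\mathcal{O}$ must be used in full, is to realise all these block-wise choices \emph{simultaneously} by a single element of the cycle space.
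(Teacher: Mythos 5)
Your reduction is sound as far as it goes, and it matches the skeleton of the paper's argument: an Eulerian orientation gives indegree $t$ (even) everywhere, reversing an even subgraph $x$ preserves the parity of all indegrees, and property $(ii)$ becomes the condition that $x$ meets each $E(C)$, $C\in\mathcal{O}$, in a set that is neither empty nor all of $E(C)$ (for this clean ``two forbidden constant patterns'' formulation you should first choose $D_0$ so that every $C\in\mathcal{O}$ \emph{is} oriented, which is possible because $G-\bigcup_{O\in\mathcal{O}}E(O)$ is even, hence $G$ has a cycle decomposition $\mathcal{Q}\supseteq\mathcal{O}$; with an arbitrary Eulerian $D_0$ the forbidden patterns are two complementary but non-constant vectors). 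However, the step you yourself flag as unresolved --- realising all the blockwise constraints \emph{simultaneously} by a single element of the cycle space --- is precisely the entire content of the lemma, and your extremal argument does not close it: the repair cycle $Z$ may pass through other members of $\mathcal{O}$ (they are only edge-disjoint, not vertex-disjoint, and $P$ may use their edges), so reversing $Z$ can orient previously non-oriented cycles and the count of bad cycles need not decrease. So the proposal has a genuine gap at its decisive point.

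It is worth knowing how the paper closes this gap, because it is not elementary. Every vertex $v$ of $G$ is replaced by a copy $T_v$ of $T-Leaf(T)$ for a suitable tree $T$ with $2t$ leaves and no lonely pendant edge, chosen so that two edges of $\partial_G(v)$ lying on the same cycle of $\mathcal{O}$ remain adjacent; this yields a $2$-connected cubic graph $G'$ in which each $O\in\mathcal{O}$ survives as an \emph{odd} cycle $O'$. Theorem~\ref{theo:KMZ_odd_circuits} (Kardoš--Máčajová--Zerafa) then supplies a perfect matching $M$ of $G'$ meeting every $O'$, and $x:=M\cap E(G)$ is exactly the even subgraph you are looking for: $|M\cap\partial_{G'}(V(T_v))|$ is even because $T_v$ has even order and $M$ is perfect, so $x$ has even degree at every vertex of $G$; and $x\cap E(O)=M\cap E(O')$ is a non-empty matching, hence a non-empty proper subset of $E(O)$. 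In other words, the ``simultaneous realisation'' is delegated to a deep theorem about cubic graphs rather than achieved by local surgery. Note finally that this route uses the oddness of the cycles in an essential way (it is the hypothesis of Theorem~\ref{theo:KMZ_odd_circuits}), whereas your argument never uses oddness; since the analogous lemma for cycles of arbitrary parity in the paper requires $3$-connectivity, it is unlikely that a purely local, parity-blind completion of your approach under mere $2$-connectivity exists.
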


\begin{proof}
Since $G$ is Eulerian, the graph $G-\bigcup_{O \in \mathcal{O}} E(O)$ can be decomposed into cycles. Thus, $G$ has a decomposition $\mathcal{Q}$ into cycles such that $\mathcal{O} \subseteq \mathcal{Q}$. Let $D$ be an orientation of $G$ such that every cycle of $\mathcal{Q}$ is an oriented cycle. We will change the direction of some edges in order to obtain the desired orientation.
 
Let $T$ be a tree such that $T$ has $2t$ leaves, every other vertex is of degree 3 and $T$ has no lonely pendant edge. Such a tree exists by Lemma~\ref{lem:existence_trees} and we note that $T$ as well as $T-Leaf(T)$ are of even order.

Now, we transform $G$ into a cubic graph as follows.
For every $v \in V(G)$ replace $v$ by a copy $T_v$ of $T-Leaf(T)$ such  that (1) every vertex of $T_v$ is of degree 3 and (2) if $e,f \in \partial_G(v)$ belong to the same cycle of $\mathcal{O}$, then $e,f$ remain adjacent in the resulting graph.
We obtain a cubic graph $G'$ with $V(G')=\bigcup_{v \in V(G)} V(T_v)$ and $E(G')=E(G) \cup \bigcup_{v \in V(G)} E(T_v)$. Furthermore, $G'$ is 2-connected, since $G$ is $2$-connected. For every $O \in \mathcal{O}$ the subgraph of $G'$ induced by $E(O)$ is an odd cycle in $G'$, which will be denoted by $O'$. Let $\mathcal{O}'=\{O'\colon O \in \mathcal{O}\}$. Hence, by Theorem~\ref{theo:KMZ_odd_circuits}, $G'$ has a perfect matching $M$ such that $E(O') \cap M \neq \emptyset$ for every $O' \in \mathcal{O}'$.

Now, for every $e \in E(G)$ for which the corresponding edge in $G'$ belongs to $M$, change the direction of $e$ in $D$ to obtain a new orientation $D'$ of $G$. For every $v \in V(G)$, the set $M \cap \partial_{G'}(V(T_v))$ is of even cardinality, since $T_v$ is of even order. Hence, for every $v \in V(G)$ we changed the direction of an even number of edges of $\partial_G(v)$. Thus, $D'$ satisfies $(i)$ since in $D$ every vertex has indegree $t$. Furthermore, for every $O' \in \mathcal{O}'$, the set $E(O') \cap M$ is a non-empty matching in $G'$. As a consequence, $D'$ satisfies $(ii)$.
\end{proof}

Note that Lemma~\ref{lem:orientation_4k-reg_case} does not hold in general for graphs of even regularity. Indeed, if $t$ is odd, then every $2t$-regular graph of odd order has an odd number of edges and hence, does not admit an orientation such that every vertex has an even indegree. Nevertheless, it is unclear to us whether Lemma~\ref{lem:orientation_4k-reg_case} is true for all $2t$-regular graphs with an even number of edges.

We will now prove Theorem~\ref{thm:4k-reg_case_Odd_circuits}, which we repeat here for the reader's convenience.

\thmfourkreg*

\begin{proof}%[Proof of Theorem~\ref{thm:4k-reg_case_Odd_circuits}]
Consider an orientation $D$ of $G$ that satisfies properties $(i)$ and $(ii)$ of Lemma~\ref{lem:orientation_4k-reg_case}.
For every $v \in V(G)$, split $v$ into $t$ vertices $v_1,\ldots, v_{t}$ of degree $2$ (that is, replace $v$ by a graph $H_v$ consisting of $t$ isolated vertices $v_1, \ldots ,v_{t}$ such that every vertex of $V(H_v)$ is of degree 2 in the resulting graph). We obtain a 2-regular graph $G'$ with $E(G')=E(G)$ and $V(G')=\bigcup_{v \in V(G)} \{v_1, \ldots ,v_{t}\}$. Since every vertex in $G$ has even indegree (with respect to $D)$, this procedure can be done such that (1) for every $v \in V(G)$ and every $i \in \{1,\ldots,t\}$ the two edges incident with $v_i$ in $G'$ are either both directed towards $v$ or both not directed towards $v$ in $G$ and (2) if $O \in \mathcal{O}$, $x \in V(O)$ and $e,f \in E(O)\cap \partial_G(x)$ are such that $e,f$ are either both directed towards $x$ or both not directed towards $x$, then $e,f$ are adjacent in $G'$. By (1), the graph $G'$ is bipartite. Hence, it has a perfect matching $M$. Let $F$ be the subgraph of $G$ induced by the edge set $M$. Observe that $F$ is a $t$-factor of $G$, since $M$ is a perfect matching of $G'$. Furthermore, for every $O \in \mathcal{O}$ there is a $v_O \in V(O)$ such that the two edges in $\partial_G(v_O) \cap E(O)$ are either both directed towards $v_O$ or both not directed towards $v_O$, since no cycle of $\ca O$ is an oriented cycle (with respect to $D$). Thus, by the construction of $G'$, these two edges are adjacent in $G'$, and hence $M$, as well as $E(F)$, contains exactly one of these edges. As a consequence, $E(O) \cap E(F) \neq \emptyset$ and $E(O) \cap (E(G) \setminus E(F)) \neq \emptyset$. Thus, $F$ has the desired properties.
\end{proof}

%Note that the complement of $F$ in the statement of Theorem~\ref{thm:4k-reg_case_Odd_circuits} satisfies the same properties of $F$. 
We remark that in the previous proof the assumption $t$ is even is only used when applying Lemma~\ref{lem:orientation_4k-reg_case}. Hence, if Lemma~\ref{lem:orientation_4k-reg_case} could be extended to all $2t$-regular graphs with an even number of edges, then Theorem~\ref{thm:4k-reg_case_Odd_circuits} is also true for all $2t$-regular graphs with an even number of edges. Furthermore, as is the case of $3t$-regular graphs, we obtain the following corollary.

\begin{cor}
\label{cor:4k-reg_case_Odd_circuits}
Let $t\geq 2$ be an even integer and let $G$ be a 2-connected $2t$-regular graph. Let $\mathcal{O}$ be a set of pairwise edge-disjoint odd cycles of $G$. Then, for every $l \in \{t, t+2,\ldots,2t\}$ there exists a $l$-factor $F$ of $G$ such that $E(F) \cap E(O) \neq \emptyset$ for every $O \in \mathcal{O}$.
\end{cor}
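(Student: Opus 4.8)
The plan is to deduce this corollary from Theorem~\ref{thm:4k-reg_case_Odd_circuits} in exactly the way Corollary~\ref{cor:3k-reg_case_Odd_circuits} is deduced from Theorem~\ref{theo:generalisation_KMZ_odd_circuits}: start with a $t$-factor that meets every cycle of $\mathcal{O}$, and then enlarge it by adjoining $2$-factors of the complementary graph. Concretely, I would first apply Theorem~\ref{thm:4k-reg_case_Odd_circuits} to $G$ and $\mathcal{O}$ to obtain a $t$-factor $F$ with $E(F) \cap E(O) \neq \emptyset$ for every $O \in \mathcal{O}$ (the second conclusion of that theorem is not needed here and may be discarded).

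Next I would exploit the parity hypothesis, which is the whole point of the remark preceding the statement. Since $t$ is even, the graph $G - E(F)$ is $(2t-t)=t$-regular with $t$ even, hence it is an even-regular graph and therefore decomposes into $t/2$ edge-disjoint $2$-factors $N_1, \ldots, N_{t/2}$ (every even-regular graph admits such a decomposition). For each $l \in \{t, t+2, \ldots, 2t\}$, write $l = t + 2j$ with $j \in \{0, 1, \ldots, t/2\}$ and set $F_l = F \cup N_1 \cup \cdots \cup N_j$. Every vertex then has degree $t + 2j = l$ in $F_l$, so $F_l$ is an $l$-factor of $G$. (I note that, in line with the formulation of Corollary~\ref{cor:3k-reg_case_Odd_circuits}, the conclusion should read ``$l$-factor'' rather than ``$t$-factor''.)

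Finally, since $E(F) \subseteq E(F_l)$ and $E(F) \cap E(O) \neq \emptyset$ for every $O \in \mathcal{O}$, we also have $E(F_l) \cap E(O) \neq \emptyset$ for every $O \in \mathcal{O}$, so each $F_l$ has the required property. I do not expect any genuine obstacle in this argument: beyond Theorem~\ref{thm:4k-reg_case_Odd_circuits} it uses only the decomposition of an even-regular graph into $2$-factors and the trivial monotonicity of the intersection property under adding edges. The only subtlety worth flagging is that the evenness of $t$ is used in an essential way to ensure that $G - E(F)$ is even-regular; if one instead had $t$ odd, the complement would be odd-regular and the simple ``add a $2$-factor'' construction would no longer produce factors for the missing values of $l$.
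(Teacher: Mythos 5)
Your proposal is correct and follows exactly the paper's own route: apply Theorem~\ref{thm:4k-reg_case_Odd_circuits} to get a $t$-factor $F$ meeting every cycle of $\mathcal{O}$, then use the fact that $G-E(F)$ is even-regular (here the evenness of $t$ is essential, just as you note) to decompose it into $2$-factors and adjoin some of them to $F$, the intersection property being preserved under adding edges. Your observation that the conclusion should read ``$l$-factor'' rather than ``$t$-factor'' is also correct; this is a typo in the paper's statement.
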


\section{Arbitrary cycles}\label{sec:arbitrary_cycles}

    In addition to addressing other instances of Question~\ref{ques:t-factor_destroying_odd_circuits}, it is also interesting to study the same problem in the case that $\ca O$ consists of pairwise edge-disjoint cycles, regardless of whether they are even or odd. 
    
    First of all, we emphasize that including or excluding $2$-cycles (cycles consisting of two parallel edges) in the set $\mathcal{O}$ radically changes the nature of the problem. For any $r>3$, let $G$ be an $r$-regular $(r-2)$-connected graph of even order without a $1$-factor. Note that the existence of $G$ can be proved by combining the main result in \cite{BSW_reg_factors_of_reg_graph} with the classical Meredith extension operation \cite{Meredith}.  
    Now, consider the graph $2G$ obtained by doubling every edge of $G$. The graph $2G$ is a $2r$-regular $(r-2)$-connected graph and it admits an $(r+1)$-factor (see $\cite{BSW_reg_factors_of_reg_graph}$). 
    Let $\mathcal{O}$ be the set of all $2$-cycles of $2G$ consisting of one edge of the original graph $G$ and its parallel copy. If $2G$ had an $(r+1)$-factor intersecting all cycles in $\mathcal{O}$, then $G$ would have a $1$-factor, a contradiction. Therefore, no $(r+1)$-factor of $2G$ intersects all $2$-cycles of $\mathcal{O}$. In particular, we obtain that a fraction exceeding $\frac{1}{2}$, namely $\frac{r+1}{2r}$,  is not sufficient to guarantee the existence of the required factor: the largest fraction is $\frac{5}{8}$, obtained for $r=4$. 
    
    Furthermore, these examples are the only we have that admit a desired $t$-factor for a given value $t=r$, but not for the larger value $t=r+1$.
    We suspect that such a behavior is due to the presence of $2$-cycles in $\cal O$. Indeed, when $2$-cycles are not permitted, we are not aware of any example of an $r$-regular graph $G$ admitting both a $t$-factor and a $t'$-factor for suitable $t<t'$, where a $t$-factor intersects all members of $\mathcal{O}$, but no $t'$-factor does.
    
    Hence, we state the analogue of Question~\ref{ques:t-factor_destroying_odd_circuits} for arbitrary cycles, with the additional assumption that $\mathcal{O}$ contains only cycles of length at least $3$.

    \begin{ques}
\label{ques:t-factor_destroying_arbitrary_circuits}
 Let $r,t, \kappa$ be positive integers with $1 \leq t\leq r-2$. Is it true that for every $\kappa$-connected $r$-regular graph $G$ and every set $\mathcal{O}$ of pairwise edge-disjoint cycles of length at least $3$ in $G$ there is a $t$-factor $F$ of $G$ such that $E(F) \cap E(O) \neq \emptyset$ for every $O \in \mathcal{O}$?\end{ques}

Theorem~\ref{theo:1_connected_examples} already assures that we need to assume $\kappa \geq 2$ to hope for a general positive answer to the previous question.  

For $\kappa=2$, the following construction shows that a necessary condition to have a positive answer is $\frac{t}{r}\geq \frac12$. Consider the graph $2kK_2$ consisting of $2k$ parallel edges $e_1,...,e_{2k}$ between two vertices $u_1$ and $u_2$. Subdivide each edge $e_i$ once; denote the new vertex by $v_i$. Add $2k-2$ parallel edges between $v_i$ and $v_{i+1}$ for all odd $i\in \{1,3,..., 2k-1\}$. The resulting graph is $2$-connected and $2k$-regular. Consider the set of $k$ 4-cycles with vertices $u_1v_iu_2v_{i+1}$ for all odd $i \in \{1,3,..., 2k-1\}$. If a $t$-factor intersects all of them, then it must contain at least two edges in each of the $k$ $4$-cycles. Hence, it is easy to check that $t \geq k$, i.e.\ $\frac{t}{r} \geq \frac{1}{2}$. 

 Thus, in order to obtain results similar to Theorem~\ref{theo:generalisation_KMZ_odd_circuits} and Theorem~\ref{thm:4k-reg_case_Odd_circuits}, we need to assume that $G$ is $3$-connected. 

Kardoš, Máčajová and Zerafa~\cite{kardos2025threecuts} extended Theorem~\ref{theo:KMZ_odd_circuits} to arbitrary cycles by strengthening connectivity assumptions, thus proving the following statement.

\begin{theo}[Kardoš, Máčajová, Zerafa~\cite{kardos2025threecuts}]
\label{theo:KMZ_1+factor_acyclic_complement}
Let $G$ be a $3$-connected cubic graph and let $\ca O$ be a set of pairwise edge-disjoint cycles. Then, there exists a 1-factor $F$ of $G$ such that $E(F)\cap E(O) \neq \emptyset$ for every $O \in \ca O$.
\end{theo}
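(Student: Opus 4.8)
The plan is to reduce the arbitrary-cycle statement to the odd-cycle case already handled by Theorem~\ref{theo:KMZ_odd_circuits}, which requires only $2$-connectivity. I would first split $\mathcal{O}$ into its odd members $\mathcal{O}_{\mathrm{odd}}$ and its even members $\mathcal{O}_{\mathrm{even}}$. The odd cycles can be fed directly into Theorem~\ref{theo:KMZ_odd_circuits}; the whole difficulty is the even ones, so the core of the argument is a parity-changing surgery that turns each even cycle into an odd one while keeping the graph cubic and retaining enough connectivity to reapply the odd-cycle theorem.

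For the surgery I would use a local \emph{truncation}: for each $O \in \mathcal{O}_{\mathrm{even}}$ pick a vertex $v \in V(O)$ and replace $v$ by a triangle, attaching the two edges of $O$ incident with $v$ to two distinct triangle-vertices $w_1,w_2$ and the third (leg) edge to the remaining vertex $w_3$. Routing $O$ through the short edge $w_1w_2$ produces a cycle $O'$ of length $|V(O)|+1$, hence odd, and the resulting graph $G'$ stays cubic. Because contracting a triangle back to a single vertex induces a clean correspondence between perfect matchings of $G'$ and of $G$ (exactly one of the three external edges of each triangle lies in the matching), any $1$-factor of $G'$ produced by Theorem~\ref{theo:KMZ_odd_circuits} pulls back to a $1$-factor $F$ of $G$. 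I would choose the truncation vertices and the port assignment so that the new odd cycles remain pairwise edge-disjoint, which is possible since the cycles of $\mathcal{O}$ are edge-disjoint.

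The hard part — and the reason the statement needs $3$-connectivity rather than the $2$-connectivity sufficient for odd cycles — is the \emph{cheat edge} $w_1w_2$. A perfect matching $F'$ of $G'$ may satisfy $E(F')\cap E(O')\neq\emptyset$ solely because it contains $w_1w_2$; under the contraction this corresponds precisely to $v$ being matched by its leg and to $F$ avoiding every original edge of $O$, so the pulled-back matching need not meet $O$ at all. This is exactly the phenomenon ruled out by $3$-connectivity: the $2$-connected construction in this section shows that without it a $1$-factor may miss an even cycle, so the surgery cannot succeed by local information alone. To close this gap I would use $3$-connectivity (equivalently, the absence of nontrivial $2$-edge-cuts in the cubic graph) globally: starting from the matching delivered by the odd-cycle theorem, whenever $F$ misses an even cycle $O$ I would augment along an alternating path or cycle, supplied by the $3$-edge-connectivity, that swaps $F$ onto a genuine edge of $O$ without destroying the intersections already achieved with the other members of $\mathcal{O}$.

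Coordinating these augmentations simultaneously for all even cycles — so that repairing one does not reopen another — is where I expect the real work to lie, and it is the step that genuinely exploits the stronger hypothesis. An alternative, and perhaps cleaner, route would be to install the parity change and the no-cheat condition together into a single gadget and then invoke Theorem~\ref{theo:KMZ_odd_circuits} once; but designing a two-port cubic gadget that both flips the cycle's parity and forbids the matching from bypassing the cycle appears to require the same global connectivity input, so I would expect either route to converge on the same essential difficulty.
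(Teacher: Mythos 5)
This theorem is not proved in the paper at all: it is imported verbatim from Kardoš, Máčajová and Zerafa~\cite{kardovs2023three} as a black box, and the paper only derives from it the mild strengthening Corollary~\ref{cor:KMZ_1+factor_acyclic_complement_2cuts}. So your proposal has to stand on its own as a proof, and it does not: it contains one concretely false claim and one admitted missing step, and the missing step \emph{is} the theorem.

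The false claim is the ``clean correspondence'': it is not true that a perfect matching of the truncated graph $G'$ contains exactly one of the three external edges of each new triangle. A perfect matching of a cubic graph meets the external edges of a triangle $w_1w_2w_3$ in either one or \emph{three} edges; for instance, truncating one vertex of $K_4$ yields the triangular prism, in which the three external edges of the new triangle form a perfect matching. Such a matching does not pull back to $G$ (contraction would require all three edges at $v$ to be matched), so the reduction already breaks before the parity issue arises. This is precisely why the replacement gadgets used in Section~\ref{sec:13} of the paper are \emph{trees}, for which Lemma~\ref{obs:number_of_leafs} controls how many pendant edges are matched --- and, unfortunately for your plan, a tree gadget cannot flip the parity of a cycle, so the triangle (with its defects) is forced on you. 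The second gap is the cheat edge $w_1w_2$, which you identify yourself: Theorem~\ref{theo:KMZ_odd_circuits} may intersect $O'$ only in $w_1w_2$, which corresponds to no edge of $G$. Your proposed repair --- alternating-path augmentations ``supplied by $3$-connectivity'', coordinated simultaneously over all even cycles --- is a restatement of the difficulty, not an argument: no augmenting structure is exhibited, no invariant protecting the already-achieved intersections is maintained, and no progress measure guarantees termination. Note the structural reason a purely local reduction cannot succeed: Theorem~\ref{theo:KMZ_odd_circuits} is a statement about $2$-connected graphs, so once you invoke it as a black box the $3$-connectivity of $G'$ is information the argument never uses; yet the quoted theorem genuinely needs connectivity beyond $2$ (Corollary~\ref{cor:KMZ_1+factor_acyclic_complement_2cuts} pinpoints the obstruction as $2$-edge-cuts separating members of $\mathcal{O}$). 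Any correct proof must therefore inject $3$-connectivity somewhere essential, and in your sketch it enters only as a hope. The actual proof in~\cite{kardovs2023three} is a substantial standalone structural argument, not a gadget reduction to the odd-cycle case; the paper you are reading uses reductions of your flavor only in the opposite direction, to derive its higher-regularity results \emph{from} this theorem.
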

 
By Theorem~\ref{theo:odd_circuits_t<r/3}, $\frac{t}{r} \geq \frac{1}{3}$ is a necessary condition to have a positive answer to Question~\ref{ques:t-factor_destroying_arbitrary_circuits}. Analogously to Theorem~\ref{theo:generalisation_KMZ_odd_circuits} and Theorem~\ref{thm:4k-reg_case_Odd_circuits}, in the remainder of this section we prove that $\frac{t}{r} \geq \frac{1}{2}$ with $t$ even and $\frac{t}{r} = \frac{1}{3}$ are also sufficient conditions for $\kappa=3$. 
In order to do so, we need the following slightly stronger version of Theorem~\ref{theo:KMZ_1+factor_acyclic_complement}.

\begin{cor}
\label{cor:KMZ_1+factor_acyclic_complement_2cuts}
Let $G$ be a 2-edge-connected cubic graph. Let $\ca O$ be a set of pairwise edge-disjoint cycles  such that for every $2$-edge-cut $\{e,f\}$, all elements of $\ca O$ are subgraphs of the same component of $G-\{e,f\}$. Then, there exists a 1-factor $F$ of $G$ such that $E(F)\cap E(O) \neq \emptyset$ for every $O \in \ca O$.
\end{cor}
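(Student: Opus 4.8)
The plan is to prove Corollary~\ref{cor:KMZ_1+factor_acyclic_complement_2cuts} by induction on $|V(G)|$, peeling off $2$-edge-cuts one at a time until the graph is $3$-connected, at which point Theorem~\ref{theo:KMZ_1+factor_acyclic_complement} applies directly. For the base case I would show that a $2$-edge-connected cubic graph with no $2$-edge-cut is in fact $3$-connected: if $\{u,v\}$ were a $2$-vertex-cut then, as $G$ is bridgeless, each component of $G-\{u,v\}$ would send at least two edges to $\{u,v\}$; a component sending exactly two already yields a $2$-edge-cut, while if each sends at least three then, since at most six edges leave $\{u,v\}$, there are exactly two components, and one checks that a suitable union of one component with one of $u,v$ has boundary of size $2$. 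Either way we get a contradiction, so $G$ is $3$-connected and Theorem~\ref{theo:KMZ_1+factor_acyclic_complement} provides the $1$-factor.

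For the inductive step, suppose $G$ has a $2$-edge-cut $\{e,f\}$ and write $e=x_1x_2$, $f=y_1y_2$, with $G_1,G_2$ the two components of $G-\{e,f\}$, indexed so that every cycle of $\ca O$ lies in $G_1$ (possible by hypothesis). A degree count shows $|V(G_1)|$ and $|V(G_2)|$ are both even, and $2$-edge-connectivity forces $x_1\neq y_1$ and $x_2\neq y_2$ (otherwise the unique $G_1$- or $G_2$-edge at the coinciding vertex would be a bridge of $G$). I would then define $G_1'$ by replacing $G_2$ with a single new edge $g_1=x_1y_1$; this $G_1'$ is cubic, has strictly fewer vertices than $G$, and is again $2$-edge-connected, since a bridge of $G_1'$ would translate into a bridge of $G$.

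Before invoking induction on $G_1'$ I must check that it still satisfies the $2$-edge-cut hypothesis. Given a $2$-edge-cut $\{e',f'\}$ of $G_1'$, I would distinguish whether $g_1\in\{e',f'\}$: if not, one side of the cut is a subset of $V(G_1)$ whose boundary in $G$ equals exactly $\{e',f'\}$; if so, say $f'=g_1$, then replacing $g_1$ by $e$ shows that $\{e',e\}$ is a $2$-edge-cut of $G$. In either case the corresponding cut of $G$ keeps all of $\ca O$ on one side by the hypothesis on $G$, and since every cycle lives inside $G_1$ this pins it to a single side of the $G_1'$-cut as well. Induction then provides a $1$-factor $F_1'$ of $G_1'$ meeting every cycle of $\ca O$ (the cycles are unchanged, as they avoid $e$, $f$, and $g_1$).

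Finally I would lift $F_1'$ to $G$, and this is where I expect the only genuine obstacle. If $g_1\notin F_1'$, then $F_1'$ is a $1$-factor of $G_1$ and I extend it by a $1$-factor of $G_2$, using neither of $e,f$; if $g_1\in F_1'$, then $F_1'-g_1$ matches $V(G_1)\setminus\{x_1,y_1\}$ and I extend it by $e$, $f$, and a $1$-factor of $G_2-\{x_2,y_2\}$. Since induction does not let me choose which case occurs, I need \emph{both} $G_2$ and $G_2-\{x_2,y_2\}$ to admit $1$-factors. I would obtain these from the classical fact that every edge of a bridgeless cubic graph lies in a perfect matching, applied to the bridgeless cubic graph $G_2^+:=G_2+x_2y_2$: the edge $x_2y_2$ lies in some perfect matching, which restricts to a $1$-factor of $G_2-\{x_2,y_2\}$, while an edge incident to $x_2$ other than $x_2y_2$ lies in some perfect matching that is thereby forced to avoid $x_2y_2$, giving a $1$-factor of $G_2$. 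In every case the resulting $F$ is a $1$-factor of $G$ meeting each cycle of $\ca O$, which completes the induction.
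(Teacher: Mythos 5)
Your proof is correct and follows essentially the same route as the paper's: reduce to the $3$-connected case via Theorem~\ref{theo:KMZ_1+factor_acyclic_complement}, split at a $2$-edge-cut, add an edge on the side containing $\mathcal{O}$ and recurse, add an edge on the other side and use the classical fact that every edge of a bridgeless cubic graph lies in (and can be avoided by) a perfect matching, then combine according to whether the recursion's matching uses the added edge. The only differences are cosmetic (induction on $|V(G)|$ instead of a minimal counterexample) and that you verify details the paper leaves implicit, such as the base case and the preservation of the $2$-edge-cut hypothesis.
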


\begin{proof}
Suppose $G$ is a smallest counterexample. By Theorem~\ref{theo:KMZ_1+factor_acyclic_complement}, we may assume $G$ has a 2-edge-cut $\{x_1y_1,x_2y_2\}$. Let $H_1, H_2$ be the components of $G-\{x_1y_1,x_2y_2\}$, where $x_1,x_2$ and all cycles of $\ca O$ belong to $H_1$. By the minimality of $G$, the cubic graph $H_1+x_1x_2$ has a perfect matching $M$ that intersects every cycle of $\ca O$. Furthermore, $H_2 +y_1y_2$ has two perfect matchings $M_1, M_2$ such that $y_1y_2 \in M_1$ and $y_1 y_2 \notin M_2$. If $x_1x_2 \in M$, then $(M \setminus \{x_1x_2\}) \cup (M_1 \setminus \{y_1y_2\}) \cup \{x_1y_1, x_2y_2\}$ is a perfect matching of $G$. If $x_1x_2 \notin M$, then $M \cup M_2$ is a perfect matching of $G$. A contradiction since $G$ is a counterexample.
\end{proof}

By using the above corollary, we obtain similar results as in Section~\ref{sec:13} and Section~\ref{sec:12}. In the remainder of this section, we state these results and shortly explain how the proofs are adjusted. 

Let $t\geq 1$ be an integer, let $G$ be a $3$-connected $3t$-regular graph and let $\mathcal{O}$ be a set of pairwise edge-disjoint cycles of length at least 3 of $G$. Let $G'$ be the cubic graph constructed from $G$ in the proof of Theorem~\ref{theo:generalisation_KMZ_odd_circuits} and let $\mathcal{O}'$ be defined analogously. For every $v,w \in V(G)$, the graph $G'-(V(T_v) \cup V(T_w))$ is connected, since $G$ is $3$-connected. Hence, $G'$ is 2-edge-connected. Moreover, if $\{e,f\}$ is a 2-edge-cut in $G'$, then there are two vertices $v,w \in V(G)$ that are connected by parallel edges in $G$ such that $e \in E(T_v)$ and $f \in E(T_w)$. An example is given in Figure~\ref{fig:2_cut_example}.

\begin{figure}[!htbp]
\centering
\includegraphics[scale=0.5]{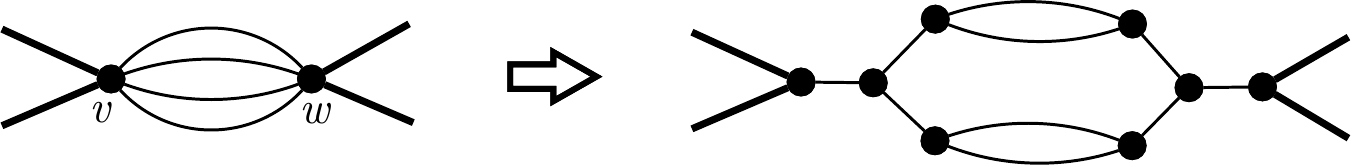}
\caption{An example for the replacement of two vertices $v,w$ of $G$ (left) in the proof of Theorem~\ref{theo:generalisation_KMZ_odd_circuits} in the case $t=2$. The resulting graph $G'$ (right) has 2-edge-cuts. The bold edges belong to a cycle of $\ca O$ or $\ca O'$, respectively.}
\label{fig:2_cut_example}
\end{figure}

Since $\ca O$ does not contain $2$-cycles, all cycles of $\ca O'$ are subgraphs of the same component of $G'-\{e,f\}$. Thus, the following theorem can be proven analogously to Theorem~\ref{theo:generalisation_KMZ_odd_circuits} by using Corollary~\ref{cor:KMZ_1+factor_acyclic_complement_2cuts}.

\begin{theo}
\label{theo:generalisation_KMZ_arbitrary_circuits}
Let $t\geq 1$ be an integer and let $G$ be a $3$-connected $3t$-regular graph. Let $\mathcal{O}$ be a set of pairwise edge-disjoint cycles of length at least 3 of $G$. Then, there exists a $t$-factor $F$ of $G$ such that $E(F) \cap E(O)$ is a non-empty matching of $G$ for every $O \in \mathcal{O}$.
\end{theo}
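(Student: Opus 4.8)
The plan is to mirror the proof of Theorem~\ref{theo:generalisation_KMZ_odd_circuits} almost verbatim, with the single essential change that the appeal to Theorem~\ref{theo:KMZ_odd_circuits} is replaced by the stronger Corollary~\ref{cor:KMZ_1+factor_acyclic_complement_2cuts}. Concretely, I would first fix a tree $T \in \mathcal{T}^t$ with at most one lonely pendant edge, which exists by Lemma~\ref{obs:adjacent_pendant_edges}. Then I would build the cubic graph $G'$ exactly as before: replace every $v \in V(G)$ by a copy $T_v$ of $T - Leaf(T)$ so that (1) every vertex of $T_v$ has degree $3$ and (2) whenever $e,f \in \partial_G(v)$ lie on a common cycle of $\mathcal{O}$ they remain adjacent in $G'$. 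This yields a cubic graph on $\bigcup_{v} V(T_v)$ with edge-set $E(G) \cup \bigcup_v E(T_v)$, and for each $O \in \mathcal{O}$ the subgraph induced by $E(O)$ is again a cycle $O'$ of $G'$; set $\mathcal{O}' = \{O' \colon O \in \mathcal{O}\}$.

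The only genuinely new work is verifying the hypotheses of Corollary~\ref{cor:KMZ_1+factor_acyclic_complement_2cuts} for $G'$ and $\mathcal{O}'$. First I would check that $G'$ is $2$-edge-connected: since $G$ is $3$-connected, $G' - (V(T_v) \cup V(T_w))$ is connected for any two $v,w \in V(G)$, which rules out cut-vertices and bridges arising from the replacement gadgets. Next I would classify the $2$-edge-cuts of $G'$. The claim is that every $2$-edge-cut $\{e,f\}$ of $G'$ must satisfy $e \in E(T_v)$ and $f \in E(T_w)$ for two vertices $v,w \in V(G)$ joined by parallel edges in $G$; again this follows from the $3$-connectivity of $G$, since any other configuration of two edges would fail to disconnect $G'$. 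Because $\mathcal{O}$ contains only cycles of length at least $3$ (no $2$-cycles), no cycle $O'$ can use both edges of such a parallel pair, so every $O' \in \mathcal{O}'$ lies entirely within one component of $G' - \{e,f\}$. Thus all elements of $\mathcal{O}'$ sit in the same component for every $2$-edge-cut, which is precisely the hypothesis the corollary requires.

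Having established this, I would apply Corollary~\ref{cor:KMZ_1+factor_acyclic_complement_2cuts} to obtain a perfect matching $M$ of $G'$ with $M \cap E(O') \neq \emptyset$ for every $O' \in \mathcal{O}'$. Letting $F$ be the subgraph of $G$ induced by $M \cap E(G)$, Lemma~\ref{obs:number_of_leafs} gives $|\partial_{G'}(V(T_v)) \cap M| = t$ for every $v \in V(G)$, so $F$ is a $t$-factor. The matching property transfers exactly as in the odd case: condition (2) in the construction guarantees that $E(O') \cap M$ is a non-empty matching in $G'$, and hence $E(O) \cap E(F)$ is a non-empty matching in $G$ for every $O \in \mathcal{O}$.

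I expect the main obstacle to be the $2$-edge-cut classification, i.e.\ proving that the only $2$-edge-cuts of $G'$ come from parallel-edge pairs of $G$ and therefore cannot separate any cycle of $\mathcal{O}'$ once $2$-cycles are excluded. This is exactly the point where $3$-connectivity of $G$ (rather than mere $2$-connectivity) and the length-$\geq 3$ assumption on $\mathcal{O}$ are needed, and it is what upgrades the argument from Theorem~\ref{theo:KMZ_odd_circuits} to Corollary~\ref{cor:KMZ_1+factor_acyclic_complement_2cuts}; everything else is a direct transcription of the proof of Theorem~\ref{theo:generalisation_KMZ_odd_circuits}.
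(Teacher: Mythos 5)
Your proposal matches the paper's proof essentially step for step: the same construction of the cubic graph $G'$ from Theorem~\ref{theo:generalisation_KMZ_odd_circuits}, the same use of $3$-connectivity to show $G'$ is $2$-edge-connected and that every $2$-edge-cut $\{e,f\}$ arises from a pair of parallel edges of $G$ with $e \in E(T_v)$ and $f \in E(T_w)$, the same appeal to the exclusion of $2$-cycles to place all of $\mathcal{O}'$ in one component, and the same replacement of Theorem~\ref{theo:KMZ_odd_circuits} by Corollary~\ref{cor:KMZ_1+factor_acyclic_complement_2cuts} before finishing via Lemma~\ref{obs:number_of_leafs}. This is correct and is precisely the argument the paper sketches.
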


Note that we can not prescribe an edge that belongs to the $t$-factor as in Theorem~\ref{theo:generalisation_KMZ_odd_circuits}, since this is not possible in Theorem~\ref{theo:KMZ_1+factor_acyclic_complement}.

Next, let $t \geq 2$ be an even integer, let $G$ be a $3$-connected $2t$-regular graph and let $\mathcal{O}$ be a set of pairwise edge-disjoint cycles of length at least 3 of $G$. Let $G'$ be the cubic graph constructed from $G$ in the proof of Lemma~\ref{lem:orientation_4k-reg_case} and let $\mathcal{O}'$ be defined analogously. By the same arguments as above, $G'$ is 2-edge-connected and for every 2-edge-cut $\{e,f\}$ all cycles of $\ca O'$ are subgraphs of the same component of $G'-\{e,f\}$. Thus, by using Corollary~\ref{cor:KMZ_1+factor_acyclic_complement_2cuts}, 
the following lemma can be proven analogously to Lemma~\ref{lem:orientation_4k-reg_case}.

\begin{lem}
\label{lem:orientation_4k-reg_case_arbitrary_cycles}
Let $t \geq 2$ be an even integer, let $G$ be a $3$-connected $2t$-regular graph and let $\mathcal{O}$ be a set of pairwise edge-disjoint cycles of length at least 3 of $G$. Then, there exists an orientation of $G$ such that
\begin{itemize}
\item[$(i)$] every vertex has an even indegree,
\item[$(ii)$] no cycle of $\ca O$ is an oriented cycle.
\end{itemize}
\end{lem}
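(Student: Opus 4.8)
The plan is to follow the proof of Lemma~\ref{lem:orientation_4k-reg_case} essentially line by line, replacing its single appeal to Theorem~\ref{theo:KMZ_odd_circuits} by an appeal to Corollary~\ref{cor:KMZ_1+factor_acyclic_complement_2cuts}; the oddness of the cycles played no role in the orientation bookkeeping there, so nothing else in the argument needs to change. First I would record that $G$ is $2t$-regular and hence Eulerian, so $G-\bigcup_{O\in\mathcal{O}}E(O)$ decomposes into cycles and $G$ admits a cycle decomposition $\mathcal{Q}$ with $\mathcal{O}\subseteq\mathcal{Q}$. Orienting each member of $\mathcal{Q}$ as a directed cycle yields an orientation $D$; since every vertex lies in exactly $t$ cycles of $\mathcal{Q}$, each vertex has indegree $t$ (which is even, as $t$ is even), and every cycle of $\mathcal{Q}$, in particular every $O\in\mathcal{O}$, is oriented. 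The goal is then to reverse a suitable \emph{even} set of edges at each vertex so that condition $(i)$ is preserved while condition $(ii)$ is destroyed.

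Next I would build the cubic graph $G'$ exactly as in the two earlier constructions: fix a tree $T$ with $2t$ leaves, all interior vertices of degree $3$ and no lonely pendant edge, and replace each $v\in V(G)$ by a copy $T_v$ of $T-Leaf(T)$ so that every vertex of $T_v$ has degree $3$ and any two edges of $\partial_G(v)$ lying on a common cycle of $\mathcal{O}$ remain adjacent in $G'$. Writing $O'$ for the cycle of $G'$ induced by $E(O)$ and $\mathcal{O}'=\{O'\colon O\in\mathcal{O}\}$, I would then verify the hypotheses of Corollary~\ref{cor:KMZ_1+factor_acyclic_complement_2cuts}. This is the one place where the new assumptions enter, and it is precisely the content of the paragraph preceding the statement: because $G$ is $3$-connected, $G'$ is $2$-edge-connected, and any $2$-edge-cut $\{e,f\}$ of $G'$ consists of one gadget edge $e\in E(T_v)$ and one gadget edge $f\in E(T_w)$ with $v,w$ joined by parallel edges in $G$; since $\mathcal{O}$ contains no $2$-cycle, all members of $\mathcal{O}'$ lie in the same component of $G'-\{e,f\}$. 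Corollary~\ref{cor:KMZ_1+factor_acyclic_complement_2cuts} then supplies a perfect matching $M$ of $G'$ with $E(O')\cap M\neq\emptyset$ for every $O'\in\mathcal{O}'$.

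Finally, I would reverse, in $D$, the direction of every edge of $G$ whose image in $G'$ lies in $M$, obtaining $D'$. For each $v$ the order of $T_v$ is even, so $|M\cap\partial_{G'}(V(T_v))|$ is even; hence an even number of edges at $v$ is flipped and the indegree parity at $v$ is unchanged, giving $(i)$. For $(ii)$, fix $O\in\mathcal{O}$: since $M\cap E(O')$ is a nonempty matching, there is a vertex $x$ of $O'$ at which one incident $O'$-edge lies in $M$ and the other does not, and by condition (2) of the replacement these are the two edges of $O$ at the corresponding vertex of $G$. Reversing exactly one of them turns the alternating in/out pattern of $D$ at that vertex into one where both $O$-edges point the same way, so $O$ is no longer an oriented cycle under $D'$. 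I expect the only genuinely delicate point to be the verification of the $2$-edge-cut hypothesis of Corollary~\ref{cor:KMZ_1+factor_acyclic_complement_2cuts}; everything else transfers verbatim from Lemma~\ref{lem:orientation_4k-reg_case}, and that step rests squarely on the $3$-connectivity of $G$ together with the exclusion of $2$-cycles from $\mathcal{O}$.
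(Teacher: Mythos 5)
Your proposal is correct and matches the paper's intended argument exactly: the paper itself proves this lemma by rerunning the proof of Lemma~\ref{lem:orientation_4k-reg_case} verbatim, replacing the appeal to Theorem~\ref{theo:KMZ_odd_circuits} with Corollary~\ref{cor:KMZ_1+factor_acyclic_complement_2cuts}, whose hypotheses are verified precisely as you do (3-connectivity of $G$ gives 2-edge-connectivity of $G'$, any 2-edge-cut of $G'$ arises from parallel edges of $G$ and lies in two gadgets, and the absence of 2-cycles in $\mathcal{O}$ places all of $\mathcal{O}'$ in one component). Your spelled-out verification of property $(ii)$ via a vertex of $O'$ incident with exactly one matching edge is also the same parity argument the paper uses.
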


By using Lemma~\ref{lem:orientation_4k-reg_case_arbitrary_cycles} instead of Lemma~\ref{lem:orientation_4k-reg_case} in the proof of Theorem~\ref{thm:4k-reg_case_Odd_circuits}, we obtain the following theorem.

\begin{theo}
\label{thm:4k-reg_case_arbitrary_circuits}
Let $t \geq 2$ be an even integer and let $G$ be a $3$-connected $2t$-regular graph. Let $\mathcal{O}$ be a set of pairwise edge-disjoint cycles of length at least 3 of $G$. Then, there exists a $t$-factor $F$ of $G$ such that $E(O) \cap E(F) \neq \emptyset$ and $E(O) \cap (E(G) \setminus E(F)) \neq \emptyset$ for every $O \in \mathcal{O}$.
\end{theo}

We remark that, in order to have a positive answer to Question \ref{ques:t-factor_destroying_arbitrary_circuits}, it is necessary that $\frac tr \geq \frac 1\kappa$, for $\kappa=2,3$.

\section{Concluding remarks and open problems}
\label{sec:concluding_remarks}

In this article, we have considered the generalization of a problem that naturally arises from a recent result in the cubic case. We have proved some necessary conditions for a positive answer to Question~\ref{ques:t-factor_destroying_odd_circuits}, which concerns odd-length cycles, and we suspect that these conditions might be sufficient. We leave it as a relevant open problem.

\begin{prob}\label{pro:oddcase}
Prove or disprove the following assertion: let $r,t$ be positive integers such that $\frac{1}{3} \leq \frac{t}{r} \leq 1$. Let $G$ be a $2$-connected $r$-regular graph admitting a $t$-factor and let $\cal O$ be a set of edge-disjoint odd cycles of $G$. Then, there exists a $t$-factor of $G$ intersecting every cycle in $\cal O$ in at least one edge. 
\end{prob}

Please note that the existence of a $2$-factor intersecting all odd cycles of a given set in a $2$-connected $5$-regular graph represents the smallest case not addressed in this paper.

We also remark that in the case $r=3$ and $t=1$ the answer to Question~\ref{ques:t-factor_destroying_odd_circuits} is trivially affirmative for 3-edge-colorable graphs, as every odd cycle intersects all three color classes, and thus the hard part of the proof of Theorem~\ref{theo:KMZ_odd_circuits} was for non-3-edge-colorable graphs. The same distinction does not hold for $r>3$. Indeed, we do not know of any approach that allows us to reduce Problem~\ref{pro:oddcase}  to non-$r$-edge-colorable graphs.

By avoiding $2$-cycles an analogous problem in the case of cycles having arbitrary length also remains. An optimal ratio, if it does exist, seems to depend on the connectivity $\kappa$ of $G$.

\begin{prob}
Prove or disprove the following assertion: let $r,t$ be positive integers and $\kappa=2,3$ such that $\frac{1}{\kappa} \leq \frac{t}{r} \leq 1$. Let $G$ be a $\kappa$-connected $r$-regular graph $G$ admitting a $t$-factor and let $\cal O$ be a set of edge-disjoint cycles of length at least $3$ of $G$. Then, there exists a $t$-factor of $G$ intersecting every cycle in $\cal O$ in at least one edge. 
\end{prob}

Finally, another interesting question is whether the original statement of Conjecture~\ref{con:S4C} (which is true by Theorem~\ref{theo:KMZ_odd_circuits}) can be extended to $r$-graphs of higher regularity. More precisely, the following question seems to be natural. What is the minimum number $t$ such that every $r$-graph has $t$ perfect matchings whose removal leaves a bipartite graph? 

\bibliography{Literatur}{}
\addcontentsline{toc}{section}{References}
\bibliographystyle{abbrv}

\end{document}